\newtheorem{proof}         {Proof}
\newtheorem{proposition}            {Proposition}
\newtheorem{remark}            {Remark}
\DeclareMathOperator*{\argmin}{arg\,min}
\newcommand{\mat}[1]{\left( \: \begin{matrix} #1 \end{matrix} \:\right)}
\newcommand{\spliteq}[1]{\begin{split} #1 \end{split}}
\begin{document}

\setlength{\abovedisplayskip}{7pt}  
\setlength{\belowdisplayskip}{7pt}  

\begin{frontmatter}

\title{
Day-Ahead Energy Market as Adjustable Robust Optimization: Spatio-Temporal Pricing of Dispatchable Generators, Storage Batteries, and Uncertain Renewable Resources
\vspace{-5mm}
\thanksref{footnoteinfo}}  

\thanks[footnoteinfo]{
Corresponding author: T.~Ishizaki, Tel. \&\ Fax: +81-3-5734-2646. 
}

\author[TIT]{Takayuki Ishizaki$^{*}$}\ead{ishizaki@sc.e.titech.ac.jp},    
\author[TUM]{Masakazu Koike}\ead{mkoike0@kaiyodai.ac.jp},
\author[TUS]{Nobuyuki Yamaguchi}\ead{n-yama@rs.tus.ac.jp},
\author[TUS]{Yuzuru Ueda}\ead{ueda@ee.kagu.tus.ac.jp},
and
\author[TIT]{Jun-ichi Imura}\ead{imura@sc.e.titech.ac.jp}   

\address[TIT]{Tokyo Institute of Technology; 2-12-1, Ookayama, Meguro, Tokyo, 152-8552, Japan.}   
\address[TUM]{Tokyo University of Marine Science and Technology; 4-5-7, Kounan, Minato, Tokyo, 108-0075, Japan.}   
\address[TUS]{Tokyo University of Science; 6-3-1, Niijuku, Katsushika, Tokyo 125-8585, Japan.}


\begin{keyword}                            
Day-ahead energy markets, Adjustable robust optimization, Spatio-temporal pricing, Distributed energy resources, Uncertainty of renewable energy resources, Convex analysis.  
\end{keyword}                              

\begin{abstract}
In this paper, we present modeling and analysis of day-ahead spatio-temporal energy markets in which each competitive player or aggregator aims at making the highest profit by managing a complex mixture of different energy resources, such as conventional generators, storage batteries, and uncertain renewable resources.
First, we develop an energy market model in terms of an adjustable robust convex program.
This market modeling is novel in the sense that the prosumption cost function of each aggregator, which evaluates 
the cost or benefit to realize an amount of spatio-temporal energy prosumption, is a multi-variable function resulting from a ``parameterized" max-min program, in which the variable of the prosumption cost function is involved as a continuous parameter and the variable of dispatchable resources is involved as an adjustable variable for energy balance.
This formulation enables to reasonably evaluate a reward for intertemporal dispatchability enhancement and a penalty for renewable energy uncertainty in a unified way.
In addition, it enables to enforce a market regulation in which every aggregator is responsible for absorbing his/her renewable energy uncertainty by managing his/her own dispatchable energy resources.
Second, in view of social economy as well as personal economy, we conduct a numerical analysis on the premise of several photovoltaic penetration levels.
In this numerical analysis, using a bulk power system model of the north east area in Japan, we demonstrate that renewable generators do not always have priority of energy supply higher than conventional generators due to their uncertainty and limited dispatchability, meaning that the merit order of conventional and renewable generators can reverse.
Furthermore, we analyze long-term evolution of competitive energy markets demonstrating that there can be found a social equilibrium of battery penetration levels, at which maximum personal profit with respect to battery system enhancement is attained.

\vspace{-3pt}
\end{abstract}\vspace{-6pt}

\end{frontmatter}


\section{Introduction}
\subsection{Research Background}

The development of a smart grid has been recognized as one of the key issues in addressing environmental and social concerns, such as the sustainability of energy resources and the efficiency of energy management \cite{annaswamy2013ieee,chu2012opportunities}.
In future power systems operation, it is crucial to appropriately manage a complex mixture of multiple types of energy resources, such as conventional generators, energy storage systems and devices, controllable and shiftable loads, and uncertain renewable resources, towards the realization of economically efficient supply-demand balance of energy.

The penetration of energy storage systems, such as electric vehicles and home energy storage systems, is generally supposed to be spatially distributed due to the limitation of installation capability \cite{dunn2011electrical}.
Photovoltaic generator installation is also supposed to be spatially distributed, especially in Japan \cite{shum2007photovoltaic}, for effective use of roof top spaces.
Even though the impact of such individual materials and components on the grid may be tiny, the aggregation of them has high potential to serve for supply-demand balance.
Thus, an aggregator, a manager of accessible energy resources, can be a strong stakeholder in electricity markets, the modeling and analysis of which are the main subject of this paper.

\subsection{Literature Review}\label{seclr}

The existing modeling and analysis of electricity markets are considerably wide-ranging in terms of objectives, settings, and concepts; see  \cite{bublitz2019survey} for a recent survey.
For a better understanding of our subject and focus, we first consider classifying existing works into the following (not necessarily mutually exclusive) categories.

\subsubsection{Day-ahead and Real-time}

The main difference between day-ahead and real-time markets can be explained in terms of ``offline scheduling" and ``online operation."
In a day-ahead market, each market player transacts their future operation schedule of  energy resources, which mainly prescribes the amounts of energy production and consumption in consideration of ancillary service for stable supply \cite{liu2016bidding,sarker2016optimal,sardou2016energy,aravena2017renewable,he2015optimal}.
On the other hand, each market player in a real-time market is supposed to transact their capability of energy resources to compensate discrepancies between operation schedules and real-time operation results \cite{knudsen2015dynamic,shiltz2016integrated,wang2016real}.
Such discrepancies are possibly caused by several factors, e.g., load prediction errors, the difference of time resolutions in scheduling and real operation, and unexpected fluctuation of renewable resources.
In general, profit-maximizing strategies in day-ahead and real-time markets are mutually correlated because of the limitation of energy resources, leading to dependency of day-ahead and real-time prices.
Optimization models considering such a market correlation are analyzed in \cite{aravena2017renewable,schneider2018energy}.

\subsubsection{Spatial and Temporal}
The notion of market transaction can be extended to space and time; see, e.g., \cite{sardou2016energy,aravena2017renewable} for models of spatio-temporal pricing.
In particular, spatial pricing in electricity markets is called \textit{locational marginal pricing} or \textit{nodal pricing} \cite{bohn1984optimal,hogan1992contract,schweppe2013spot,virasjoki2016market}, which can evaluate an impact of spatial constraints in a power network, such as the voltage capacity and thermal capacity of transmission lines.
Depending on the levels of transmission line congestion, an independent system operator (ISO) determines spatially distributed energy prices based on a marginal social cost for stable power supply.

Energy prices can also have a temporal correlation \cite{liu2016bidding,sarker2016optimal,he2015optimal,aravena2017renewable}.
This aspect deserves attention especially in recent day-ahead markets.
This is because offline scheduling is significant in considering intertemporal optimization of the mixture of different types of energy resources.
As mentioned in \cite{annaswamy2013ieee}, an electricity market that can explicitly involve such intertemporal optimization is indispensable for making use of the power shiftability of batteries or flexible loads, which will work as key energy resources for future smart grid operation.

\subsubsection{Open-loop and Closed-loop}\label{SecOC}
The behavior of market players can be modeled as an optimization problem in which a personal profit maximization is performed in consideration of market prices.
This modeling can be ``open-loop," meaning that a market price, or its prediction, is assumed to be given independently of the decision of a market player under consideration, i.e., a price-taker setting \cite{liu2016bidding,sarker2016optimal,he2015optimal,schneider2018energy,correa2018robust}.
Such an open-loop model is valid, as long as the situations of most other players, e.g., the constitution of energy resources and the strategy in profit maximization, is stationary.
However, in general, market prices are determined as assembly of the decisions of all market players.
Such a market model is ``closed-loop," meaning that the player decision is mutually correlated with the resultant price 
\cite{sardou2016energy,savelli2017optimization,aravena2017renewable,virasjoki2016market,savelli2018new,cornelusse2019community}.
This type of closed-loop modeling is crucial to discuss long-term evolution of competitive markets in which energy resource constitution is non-stationary.
An important difference between open-loop and closed-loop modeling is that the former is relevant to economic scheduling or strategic bidding from the standpoint of a ``single player" while the latter is relevant to system design or mechanism design from the standpoint of a ``system authority."

\subsubsection{Convex and Non-convex}
Microeconomics is grounded in convex analysis \cite{rockafellar1970convex,boyd2004convex}.
As thoroughly discussed in \cite{schweppe2013spot}, an optimal energy price can be calculated as a marginal social cost of economic dispatch problems, which are often formulated as convex programs.
This pricing method builds on the convexity of social costs, implying that a non-convex cost, such as the start-up cost of generators, cannot be reflected in the resultant marginal price; see, e.g., \cite{liu2016bidding,sardou2016energy,wang2016real,bertsimas2013adaptive} for models with such a unit commitment (UC) problem.
A theoretically grounded approach to dealing with such non-convexity is approximate convexification of social costs, called \textit{convex hull pricing}.
This method is shown to minimize the amount of particular side-payments caused by convexification.
However, it generally yields another complicate issue on revenue adequacy, i.e., a financial issue on revenue allocation, which is often affected by a subjective decision of ISO; see \cite{schiro2016convex} for details.

\subsubsection{Bottom-up and Top-down}

Last, but not least, the difference of market modeling concepts is explained in terms of ``bottom-up" and ``top-down."
A bottom-up approach is an engineering approach that is based on a stacking-up process of detailed elements.
Existing works can mostly be classified into this category, developing complex integrated market models.
Though such a model can serve for elaborate empirical studies, it is often cumbersome to find out key drivers and structures due to a high degree of parametric and structural freedom.
A top-down approach is a systems theoretic approach, or a reverse engineering approach, that is based on a breaking-down process starting from an ``appropriate level of abstraction."
Examples include \cite{schweppe2013spot,schiro2016convex}, which are based on supply-demand models without distributed energy resources.
A major advantage of this approach is to provide a simple but principled framework to discuss a universal law in electricity markets.
Synthesis of both bottom-up and top-down approaches would be significant to develop a principled market framework.

\subsection{Focus and Contribution of Present Paper}

With the categorization in Section~\ref{seclr}, we model and analyze an electricity market that is day-ahead, spatio-temporal, closed-loop, and convex, which we call here a \textit{spatio-temporal energy market}.
In Section~\ref{secpfm}, based on a top-down approach, we first develop a mathematical foundation for modeling and analysis of the spatio-temporal energy market, where each competitive aggregator aims at making the highest profit by managing a complex mixture of different energy resources.
The theoretical contributions in this paper are summarized as follows.
\begin{itemize}
\item  We formulate a novel prosumption cost function of each aggregator as a multi-variable function that results from a parameterized max-min program, given as the collection of an infinite number of max-min programs.
\item We prove the convexity of such a prosumption cost function, in which the amount of spatio-temporal energy prosumption is involved as a non-adjustable variable while the variable of dispatchable resources is involved as an adjustable variable for energy balance.
\end{itemize}

\begin{table*}[t]\centering
\caption{Example of market result. Three aggregators and two time spots.}\vspace{9pt}
\scriptsize
\begin{tabular}{|c||c|c|c|c|}\hline
& Agg.~1 (Producer) & Agg.~2 (Consumer) & Agg.~3 (Prosumer) & Clearing Price  \\ \hline \hline
Spot 1 (AM)& 150 {\rm [kWh]} & -250 {\rm [kWh]} & 100 {\rm [kWh]} & 10 {\rm [JPY/kWh]} \\ \hline
Spot 2 (PM)& 100 {\rm [kWh]} & -50 {\rm [kWh]} & -50 {\rm [kWh]} & 5 {\rm [JPY/kWh]} \\\hline
\end{tabular}
\label{tabex}\vspace{6pt}
\end{table*}

It will be found that the resultant market clearing problem, which is formulated as a social cost minimization problem composed of a family of the parameterized max-min programs, is given as an ``adjustable" robust convex program \cite{ben2004adjustable}, which is also referred to as a two-stage robust convex program \cite{zeng2013solving,yanikouglu2019survey}.
To the best of the authors' knowledge, such an idea of describing a ``closed-loop" energy market model based on an adjustable robust convex program has not been reported in the literature, though scheduling problems or open-loop (price taker-based) market models with uncertain resources, such as robust UC problems, have been discussed in terms of two-stage robust or stochastic optimization over recent years; see \cite{liu2016bidding,jiang2012robust,bertsimas2013adaptive,zheng2015stochastic,cobos2016least} and references therein.
As explained in Section~\ref{SecOC}, such scheduling problems are formulated from the viewpoint of a single player who can directly handle all energy resources, while our market model in this paper is formulated from the viewpoint of a system authority who considers mechanism design such that market players, each of whom has his/her own energy resources, behave as desired.
We also remark that the significance of adjustable robust optimization and difference from the standard robust optimization are clearly stated in \cite{ben2004adjustable}.

The proposed market modeling enables to evaluate not only a reward for enhancement of intertemporal dispatchability owing to dispatchable energy resources, but also a penalty for uncertainty due to renewable energy resources. 
Furthermore, it enables to enforce a ``market regulation" in which each aggregator is responsible for absorbing the uncertainty of his/her renewable power generation so that his/her dispatchable prosumption schedule can be regularly transacted in the day-ahead energy market.
These properties, deriving from the formulation based on adjustable robust optimization, give a clear distinction from the existing market models reviewed above.
In fact, the market regulation considered in this paper is compatible with the ``planned balancing policy" that has been recently published in Japan for power market liberalization, where both consumers and producers must submit deterministic consumption and generation schedules to a system operator, called Organization for Cross-regional Coordination of Transmission Operators (OCCTO); see, e.g., \cite[Section~19]{kobayashi2019electricity} for details. 
We remark that our market modeling based on adjustable robust optimization can be seen as a mathematical interpretation of the planned balancing policy in Japan.

In Section~\ref{secnum}, on the premise of several photovoltaic penetration levels, we conduct a numerical analysis to investigate what level of battery penetration can be realized as a result of aggregators' rational decisions, under the market regulation in which each aggregator is responsible for absorbing the uncertainty of his/her renewable power generation.
The specific contribution there is to give the following insights from the viewpoint of mechanism design.
\begin{itemize}
\item 
The proposed market model in terms of adjustable robust optimization can naturally reproduce the empirical finding \cite{lunackova2017merit} that renewable generators do not always have priority of energy supply higher than conventional generators due to their uncertainty and limited dispatchability.
\item There is a social equilibrium of battery penetration levels, at which maximum personal profit with respect to battery system enhancement is attained.
\end{itemize}
The first insight is relevant to the ``merit order" of conventional versus renewable generators.
The second is relevant to long-term evolution of competitive energy markets; see Section~\ref{seciq} for more details of motivation to study.
This paper builds on preliminary versions \cite{ishizaki2017distributed,ishizaki2017bidding}.
Compared with them, this paper newly gives the formulation of uncertain renewable resources, which enables the analysis of battery penetration levels.
In addition, spatial constraints are also considered in the proposed model.

\section{Preliminaries}

\subsection{Day-Ahead Energy Market as Convex Optimization}\label{secmkopt}

In this subsection, to make the following discussion self-consistent, we overview the relation between a day-ahead energy market and an optimization problem from a top-level view of abstraction.
For clarity, we first consider a small example where three market players, called aggregators, participate in a day-ahead market.
In particular, the schedule of the total energy amount in the forenoon and that in the afternoon are supposed to be transacted, meaning that the whole day schedule transaction is divided into two energy markets at the AM and PM spots.

The objective of ISO is to find a suitable set of transaction energy amounts among the aggregators and a set of clearing prices assigned to the transaction energy amounts.
For example, suppose that a market result is determined as in Table~\ref{tabex}, where positive and negative energy amounts correspond to production and consumption, respectively.
In this example, the transaction energy amounts can be written as
\[
x_{1}^{*}=
\mat{150\\
100},
\quad
x_{2}^{*}=\mat{
-250\\
-50
}, \quad
x_{3}^{*}=\mat{
100\\
-50
}
\]
and the clearing price can be written as 
\[
\lambda^{*}=\mat{
10\\
5
}.
\]
Note that all these vectors, which are the decision variables of ISO, are two-dimensional vectors, the dimension of which corresponds to the number of time spots.
Furthermore, the transaction energy amounts are balanced, i.e., 
\[
x_{1}^{*}+x_{2}^{*}+x_{3}^{*}=0.
\]
This represents the balance of production and consumption (prosumption) energy amounts at ``every" time spot.

Let $\mathcal{A}$ denote the label set of aggregators and let $\mathcal{T}$ denote the label set of time spots on the day of interest.
In this paper, the event of finding a set of balanced prosumption profiles and a clearing price profile, denoted by $(x_{\alpha}^{*})_{\alpha\in \mathcal{A}}$ and $\lambda^{*}$, is referred to as \textit{market clearing}; A specific example in which 
\[
\mathcal{A}=\{1,2,3\},\quad \mathcal{T}=\{{\rm AM},{\rm PM}\}
\]
is given above.
With this terminology, we review a general market clearing problem in terms of intertemporal optimization.
To this end, we introduce the notion of a \textit{profit}, which is a personal objective function of aggregators.
Under a clearing price profile $\lambda^{*}$, the resultant profit of the $\alpha$th aggregator is written as
\begin{equation}\label{profJ}
J_{\alpha}(x_{\alpha}^{*};\lambda^{*})=\langle\lambda^{*},x_{\alpha}^{*}\rangle-F_{\alpha}(x_{\alpha}^{*})
\end{equation}
where the inner product $\langle\lambda^{*},x_{\alpha}^{*}\rangle$ corresponds to the income obtained from the energy market and $F_{\alpha}$ denotes the cost function to realize a prosumption profile.
Note that $F_{\alpha}$ may be negative, meaning that it may be a benefit function depending on its sign.
Similarly, the income may also be negative, meaning that the inner product term can be regarded as income or outgo depending on its sign.
For instance, 
\[
\langle\lambda^{*},x_{1}^{*}\rangle=2000,\quad\langle\lambda^{*},x_{2}^{*}\rangle=-2750,\quad
\langle\lambda^{*},x_{3}^{*}\rangle=750
\]
in the three-aggregator example above.

The social profit, which is a social objective function of ISO, is given as
\begin{equation}\label{sosprof}
\displaystyle 
\sum_{\alpha = 1}^{|\mathcal{A}|}
J_{\alpha}(x_{\alpha}^{*};\lambda^{*})=\left\langle\lambda^{*},
\sum_{\alpha = 1}^{|\mathcal{A}|}
x_{\alpha}^{*}\right\rangle-
\sum_{\alpha = 1}^{|\mathcal{A}|}
F_{\alpha}(x_{\alpha}^{*}).
\end{equation}
Note that the social income, represented as the inner product term, is zero as long as the balance of prosumption profiles is attained.
From this observation, we see that the social profit maximization problem is equivalent to the social cost minimization problem formulated as
\begin{equation}\label{marketp}
\displaystyle \min_{
(x_1,\ldots,x_{|\mathcal{A}|})
\in \mathcal{C}_1 \times\cdots \times \mathcal{C}_{|\mathcal{A}|}
}\hspace{0pt}
\sum_{\alpha = 1}^{|\mathcal{A}|}
F_{\alpha}(x_{\alpha})\ \ {\rm s.t.}\ \ 
\sum_{\alpha = 1}^{|\mathcal{A}|}
x_{\alpha}=0.
\end{equation}
In this optimization problem, a feasible domain $\mathcal{C}_{\alpha}$ is introduced for each prosumption profile $x_{\alpha}$.
In particular, we define it as
\begin{equation}\label{conC}
\mathcal{C}_{\alpha}:= \{x_{\alpha} : |x_{\alpha}| \leq C_{\alpha} \},
\end{equation}
which represents the transmission line constraint on $x_{\alpha}$.
For a better understanding, we give a schematic depiction of the three-aggregator example in Fig.~\ref{figthreeagg}\textbf{(a)} where $x_{\alpha}$ can be seen as the prosumption profile outflowing from the $\alpha$th aggregator to the central bus at which supply-demand balance is attained.
This is a single bus system, meaning that no specific network structure among aggregators is considered.

It is clear that an optimal primal variable $(x_{\alpha}^{*})_{\alpha\in \mathcal{A}}$ corresponds to the transacted prosumption profiles.
Furthermore, an optimal dual variable $\lambda^{*}$, i.e., the Lagrange multiplier associated with the prosumption balance constraint, corresponds to the clearing price profile \cite{bohn1984optimal,schweppe2013spot}.
Thus, the socially optimal market clearing problem is equivalent to the social cost minimization problem \eqref{marketp}, provided that it is a convex program \cite{rockafellar1970convex,boyd2004convex}.
In addition, the socially optimal market results can attain the profit maximization of each aggregator; 
see Proposition~\ref{proppc} for details.

\begin{figure}[t]
\centering
\includegraphics[width=60mm]{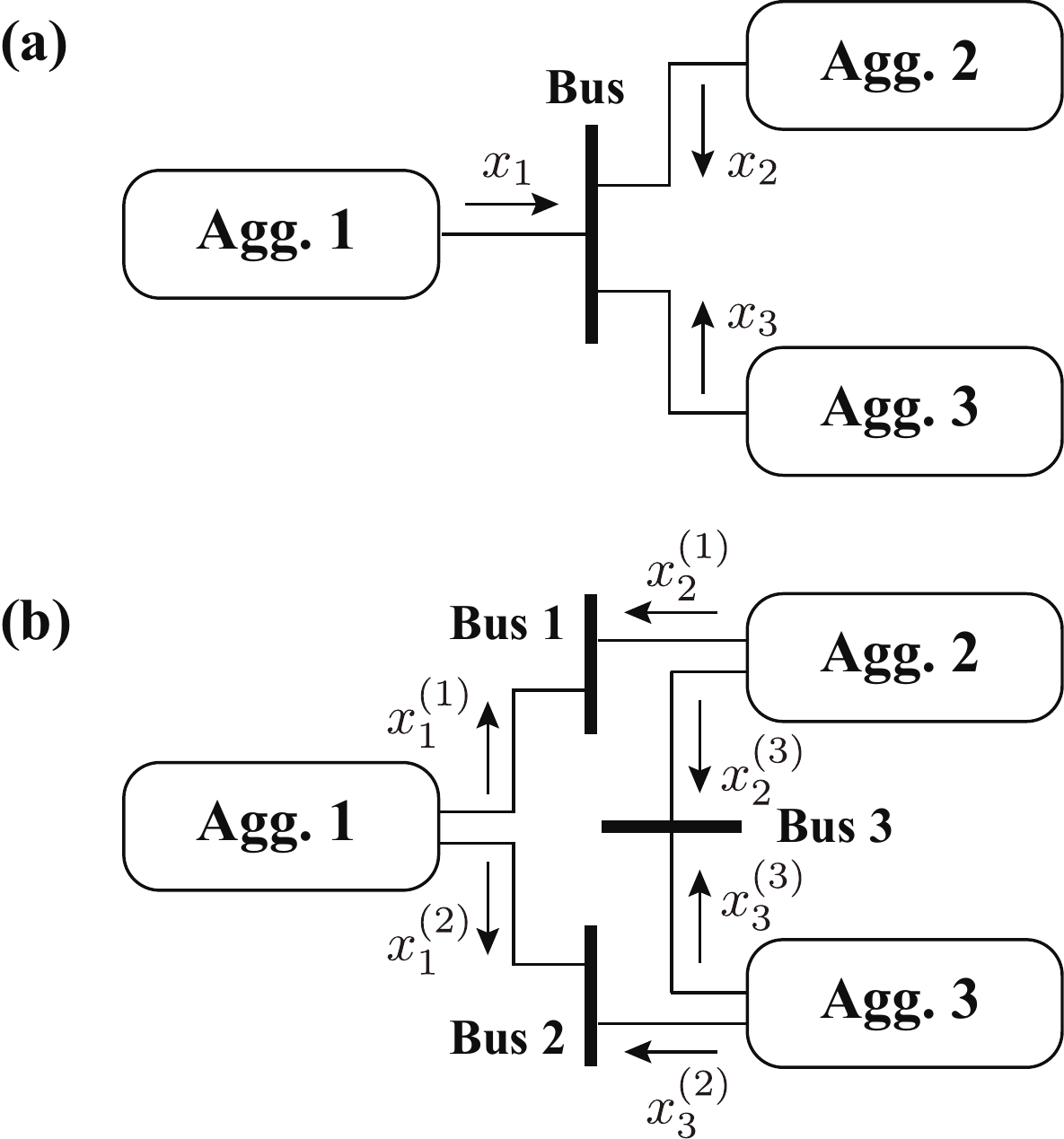}
\vspace{3pt}
\caption{
\textbf{(a)} Single-bus system.
\textbf{(b)} Three-bus system.
}
\label{figthreeagg}
\vspace{8pt}
\end{figure}

\subsection{Day-Ahead Energy Market with Spatial Constraints}\label{secspatial}

In this subsection, a network structure among aggregators is introduced.
For explanation, we again refer to the three-aggregator example in Section~\ref{secmkopt}.
Let us give a network structure among three aggregators as in Fig.~\ref{figthreeagg}\textbf{(b)}, where three intermediate buses are introduced.
The prosumption profile outflowing from the $\alpha$th aggregator to the $i$th bus is denoted by $x^{(i)}_{\alpha}$.
In a manner similar to \eqref{conC}, its transmission line constraint can be written with 
\[
\mathcal{C}_{\alpha}^{(i)}:= \{x_{\alpha}^{(i)} : |x_{\alpha}^{(i)}| \leq C_{\alpha}^{(i)} \}.
\]
If there is no direct transmission line between the $\alpha$th aggregator and the $i$th bus, we set $C_{\alpha}^{(i)}=0$, which can enforce $x_{\alpha}^{(i)}=0$.
For example, we set
\[
C_1^{(3)}=0, \quad C_2^{(2)}=0, \quad C_3^{(1)}=0
\]
as being compatible with Fig.~\ref{figthreeagg}\textbf{(b)}.

With slight abuse of notation, we redefine the variable of the $\alpha$th aggregator as the augmented prosumption profile
\begin{equation}\label{augpro}
x_{\alpha}:=(x^{(1)}_{\alpha},\ldots,x^{(|\mathcal{B}|)}_{\alpha})
\end{equation}
where $\mathcal{B}$ denotes the label set of buses.
Note that this $x_{\alpha}$ represents a ``spatio-temporal" prosumption profile, which is $(|\mathcal{T}| \times |\mathcal{B}|)$-dimensional.
Then, we can compactly represent the set of all transmission line constraints on $x^{(1)}_{\alpha},\ldots,x^{(|\mathcal{B}|)}_{\alpha}$ by $x_{\alpha} \in \mathcal{C}_{\alpha}$ where $\mathcal{C}_{\alpha}$ is also redefined as
\begin{equation}\label{augcalC}
\mathcal{C}_{\alpha}:= \mathcal{C}_{\alpha}^{(1)}\times \cdots \times \mathcal{C}_{\alpha}^{(|\mathcal{B}|)}.
\end{equation}
Based on this augmented representation, the market clearing problem with the spatial constraints can also be written in the form of \eqref{marketp}.
It should be remarked that the equality constraint of \eqref{marketp} is equivalent to
\[
\sum_{\alpha = 1}^{|\mathcal{A}|}
x_{\alpha}^{(i)}=0,\quad
\forall i \in \mathcal{B}.
\]
In accordance with each balancing equation, a clearing price profile, denoted by $\lambda^{(i)}$, is assigned at each of all buses.
In other words, the resultant clearing price profile is also a spatio-temporal variable as
\begin{equation}\label{auglam}
\lambda := (\lambda^{(1)},\ldots,\lambda^{(|\mathcal{B}|)}),
\end{equation}
which is $(|\mathcal{T}| \times |\mathcal{B}|)$-dimensional.

\subsection{Key Questions}\label{seciq}

In this subsection, we list several key questions to be discussed in this paper.
The first question is:
\begin{description}\vspace{1pt}
\item[\textbf{Q1}:] How to model the cost function of prosumption profiles in a reasonable manner?
\end{description}\vspace{1pt}
In the discussion of Section~\ref{secmkopt} above, we implicitly suppose that the prosumption cost function $F_{\alpha}$ in \eqref{marketp} is a given convex function, which must be ``multi-variable," i.e., 
\[
F_{\alpha}:\mathbb{R}^{|\mathcal{T}| \times |\mathcal{B}| }\rightarrow \mathbb{R}.
\]
However, in practice, it is not always easy to identify such an ``spatio-temporally" correlated cost function.
In addition, aggregators are generally ``prosumers," meaning that the prosumption profile $x_{\alpha}$ should be a complex mixture of different types of energy resources.
A prosumption cost function should reflect idiosyncratic properties and structures of each prosumer.
An answer to this question will be given in Section~\ref{secpfm}.

The second question is:
\begin{description}\vspace{1pt}
\item[\textbf{Q2}:] Is it reasonable to say in the spatio-temporal energy market that uncertain renewable resources have priority higher than dispatchable generators in terms of conventional merit order?
\end{description}\vspace{1pt}
It is often mentioned that renewable generators have priority higher than conventional generators because the marginal cost of renewable resources is zero, or close to zero.
However, this conclusion is possibly premature in the spatio-temporal energy market.
This is because the marginal cost function to realize a prosumption profile $x_{\alpha}$ is not only ``multi-variable" but also ``multi-valued," i.e.,
\begin{equation}\label{mulpF}
\partial F_{\alpha}:\mathbb{R}^{|\mathcal{T}| \times |\mathcal{B}| }\rightarrow \mathbb{R}^{|\mathcal{T}| \times |\mathcal{B}|}
\end{equation}
which is called the \textit{subdifferential} of $F_{\alpha}$ shown to be a monotone mapping \cite{rockafellar1970convex}.
This multi-dimensionality comes from the spatio-temporal correlation of prosumption cost functions.
In addition, renewable and conventional generators should be distinguished in terms of the difference of realizable prosumption profiles, i.e., the difference of intertemporal dispatchability.
Thus, it is not trivial to determine the ``merit order" of different energy resources.

Furthermore, the uncertainty of renewable power generation must be considered in discussing its marginal cost.
Note that, though the day-ahead prediction of renewable power generation necessarily involves some degree of uncertainty, the prosumption profile $x_{\alpha}^{*}$ should be ``deterministic" so that it can be regularly transacted in the day-ahead market.
This implies that each aggregator should be responsible for absorbing the uncertainty of his/her renewable power generation.
In general, large uncertainty can lead to conservative management of dispatchable energy resources; thereby reducing social profit as well as personal profit of each aggregator.

One prospective way to regulating the uncertain fluctuation of renewable power generation would be a smart use of energy storage systems.
In fact, its potential has been attracting much attention towards effective integration of dispatchable and renewable power generation.
However, the existing works have not yet been paid much attention on the question:
\begin{description}\vspace{1pt}
\item[\textbf{Q3}:] What level of energy storage penetration is economically reasonable from the viewpoints of not only social profit but also personal profit?
\end{description}\vspace{1pt}
In general, large penetration of energy storage systems serves for improving social profit because it has a potential to reduce the waste of renewable power curtailments and to regulate load profiles by peak load shifting.
However, from the viewpoint of aggregator's ``personal" profit, it is not clear what degree of increased profit deserves to make an investment in the enhancement of energy storage systems.
We will discuss \textbf{Q2} and \textbf{Q3} through the numerical analysis in Section~\ref{secnum}.

\section{Mathematical Foundation}\label{secpfm}

\subsection{Aggregator Model}\label{secagm}

In this subsection, we provide a model of aggregators as incorporating their idiosyncrasy.
This is a breaking-down process to elaborate on the abstracted market model in \eqref{marketp}.
For simplicity of notation, we do not make a distinction among the aggregators in this model description, i.e., we drop the subscript $\alpha$ as long as there is no chance of confusion.

The prosumption energy of an aggregator at the $t$th time spot can be described as
\begin{equation}\label{balagg}\textstyle
\sum_{i=1}^{|\mathcal{B}|}x^{(i)}_{t}
=g_{t}-l_{t}+p_{t}-q_{t}+\eta^{\rm out}\delta^{\rm out}_{t}-\frac{1}{\eta^{\rm in}}\delta^{\rm in}_{t},\quad t\in \mathcal{T}
\end{equation}
where 
$x_{t}^{(i)} \in \mathbb{R}$ denotes the prosumption energy outflowing to the $i$th bus,
$g_{t}\in \mathbb{R}_{+}$ denotes the power generation of dispatchable generators, 
$l_{t}\in \mathbb{R}_{+}$ denotes the load, 
$p_{t}\in \mathbb{R}_{+}$ and $q_{t}\in \mathbb{R}_{+}$ denote the power generation and  power curtailment of renewable generators, and 
$\delta^{\rm in}_{t}\in \mathbb{R}_{+}$ and 
$\delta^{\rm out}_{t}\in \mathbb{R}_{+}$ denote the battery charge and discharge energy.
The constants $\eta^{\rm in}$ and $\eta^{\rm out}$ denote the charge and discharge efficiency, respectively, each of which takes a value in $(0,1]$.
Note that the sign of $x_{t}^{(i)}$ is positive for outflow direction to the $i$th bus.

For a better understanding of the battery terms, let us consider the case where the aggregator of interest is just composed of a battery system, and is connected to only the $i$th bus.
In this case, \eqref{balagg} is reduced to
\[\textstyle
x_t^{(i)}=\eta^{\rm out}\delta^{\rm out}_{t}-\frac{1}{\eta^{\rm in}}\delta^{\rm in}_{t},\quad t\in \mathcal{T}.
\]
Suppose that both $\eta^{\rm out}$ and $\eta^{\rm in}$ are 0.9.
Then, if the aggregator sells $x_t^{(i)}=9$, this energy is supplied by $\delta^{\rm out}_{t}=10$ and $\delta^{\rm in}_{t}=0$.
Note that the decrement in battery-stored energy is larger than the sold energy due to discharging loss.
On the other hand, if the aggregator sells $x_t^{(i)}=-10$, which implies purchase, this energy is consumed by $\delta^{\rm out}_{t}=0$ and $\delta^{\rm in}_{t}=9$, meaning that the increment in battery-stored energy is smaller than the purchased energy due to charging loss.
Note that the increment in the battery-stored energy at the $t$th time spot can be written as $\delta^{\rm in}_{t} - \delta^{\rm out}_{t}$.

In the following, we denote the stacked vector of a symbol, called a profile, by that without the subscript $t$.
For example, the load profile $l = (l_t)_{t \in \mathcal{T} }$ represents the sequence of load amounts, which is $|\mathcal{T}|$-dimensional.
Furthermore, we denote by $x$ in \eqref{augpro} the collection of the prosumption profiles outflowing from the aggregator, called the \textit{spatio-temporal prosumption profile}.
Throughout the paper, the load profile $l$ is supposed to be a given constant vector, i.e., the prediction of the load profile is supposed to perform without a prediction error.
On the other hand, the renewable power generation profile $p$ is supposed to be an uncertain variable that can vary within a scenario set, denoted by $\mathcal{P}$.
In particular,
\begin{equation}\label{PVsc}
p\in \mathcal{P},\quad\mathcal{P}:=\{p^{(1)},\ldots,p^{(m)}\},
\end{equation}
where $m$ different scenarios are considered in the day-ahead prediction.
For simplicity of discussion, we here suppose that $\mathcal{P}$ is composed of a finite number of renewable scenarios, though the following discussion can be extended also to the case of continuous sets of infinite many scenarios.
Note that, if necessary, we can also model the uncertainty of load profiles in the same way as that of renewable profiles.

The dispatchable power generation profile $g$, the renewable power curtailment  profile $q$, and the battery charge and discharge power profiles $\delta^{\rm in}$ and $\delta^{\rm out}$ are  decision variables that can be controlled by the aggregator.
To realize a desired prosumption profile $x$, the aggregator aims at controlling $g$,  $q$, and $\delta:=(\delta^{\rm in},\delta^{\rm out})$ in compliance with some physical constraints.
In particular, the bounds for dispatchable power generation and the limitation of inverter and battery capacities are represented by given domains $\mathcal{G}$ and $\mathcal{D}$, i.e.,
\begin{equation}\label{gddom}
g\in \mathcal{G},\quad\delta\in \mathcal{D}.
\end{equation}
For example, these can represent a constraint $\underline{g}\leq g\leq\overline{g}$, where $\underline{g}$ and $\overline{g}$ are given constant vectors representing the lower and upper bounds for generator outputs.
The renewable power curtailment profile must not be greater than the the renewable power generation profile, i.e., 
$0\leq q\leq p$ for each scenario $p\in \mathcal{P}$.
In this paper, this is written by
\begin{equation}
q\in \mathcal{Q}(p),\quad p\in \mathcal{P}.
\end{equation}
Note that the domain $\mathcal{Q}(p)$ is dependent on which renewable power generation profile $p$ arises from the scenario set $\mathcal{P}$.

\subsection{Characterization of Realizable Prosumption Profiles}

We consider giving a mathematical description of intertemporal dispatchability of aggregator's energy prosumption.
With respect to each spatio-temporal prosumption profile $x$ under a renewable scenario $p\in \mathcal{P}$, we denote the feasible domain of $g$,  $\delta$, and $q$, which are called the dispatchable variables, by
\begin{equation}\label{calfd}
\mathcal{F}(x;p):=\bigl\{(g,\delta,q)\in \mathcal{G}\times \mathcal{D}\times \mathcal{Q}(p):\mbox{\eqref{balagg} holds}\bigr\}.
\end{equation}
Note that $x$ and $p$ are involved as ``parameters" in the constraint of the dispatchable variables.
More specifically, a feasible domain $\mathcal{F}(x;p)$ of the stacked version $(g,\delta,q)$ of the dispatchable variables is determined as a standard set if the parameters $x$ and $p$ are fixed as constant vectors.

Based on this parameterized domain, we define the following set of realizable prosumption profiles.

\begin{proposition}\label{propdom}
Suppose that $\mathcal{G}$ and $\mathcal{D}$ are convex.
Then the set of realizable prosumption profiles defined by
\begin{equation}\label{realsp}
\mathcal{X}:=
\bigl\{
x\in \mathbb{R}^{|\mathcal{T}| \times |\mathcal{B}| }
:\mathcal{F}(x;p)\neq\emptyset,\ \forall p\in \mathcal{P}\bigr\}
\end{equation}
is convex.
\end{proposition}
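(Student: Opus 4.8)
The plan is to prove convexity directly from the definition \eqref{realsp}: I take two realizable profiles $x_{0},x_{1}\in\mathcal{X}$ and a scalar $\theta\in[0,1]$, and show that $x_{\theta}:=\theta x_{0}+(1-\theta)x_{1}$ again lies in $\mathcal{X}$. Since membership in $\mathcal{X}$ requires $\mathcal{F}(x_{\theta};p)\neq\emptyset$ for \emph{every} scenario $p\in\mathcal{P}$, I first fix an arbitrary $p$ and establish nonemptiness for that one scenario; the universal quantifier over the finite set $\mathcal{P}$ is then discharged at the end. Equivalently, writing $\mathcal{X}_{p}:=\{x:\mathcal{F}(x;p)\neq\emptyset\}$, one has $\mathcal{X}=\bigcap_{p\in\mathcal{P}}\mathcal{X}_{p}$, so it suffices to show that each $\mathcal{X}_{p}$ is convex and to invoke the fact that an intersection of convex sets is convex.

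For the fixed scenario $p$, since $x_{0},x_{1}\in\mathcal{X}_{p}$ I can choose witnesses $(g_{0},\delta_{0},q_{0})\in\mathcal{F}(x_{0};p)$ and $(g_{1},\delta_{1},q_{1})\in\mathcal{F}(x_{1};p)$. I then form the corresponding convex combinations $g_{\theta},\delta_{\theta},q_{\theta}$ and claim $(g_{\theta},\delta_{\theta},q_{\theta})\in\mathcal{F}(x_{\theta};p)$. Membership in $\mathcal{G}\times\mathcal{D}\times\mathcal{Q}(p)$ follows from the assumed convexity of $\mathcal{G}$ and $\mathcal{D}$ together with the convexity of $\mathcal{Q}(p)$, the latter being automatic because for the (now fixed) $p$ the set $\mathcal{Q}(p)=\{q:0\leq q\leq p\}$ is a box.

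The remaining step is to verify the balance equation \eqref{balagg} for the combined variables. The crucial observation is that, with $p$ and the given constant load $l$ held fixed, \eqref{balagg} is affine in $(x,g,\delta,q)$; the efficiency factors $\eta^{\rm in},\eta^{\rm out}$ enter only as constant coefficients, so the battery terms are linear. Multiplying the balance equations of the two witnesses by $\theta$ and $1-\theta$ and adding them, the constant terms $-l_{t}+p_{t}$ are reproduced exactly because the weights sum to one, and every other term combines linearly; hence \eqref{balagg} holds for $(x_{\theta},g_{\theta},\delta_{\theta},q_{\theta})$. This yields $\mathcal{F}(x_{\theta};p)\neq\emptyset$, and since $p\in\mathcal{P}$ was arbitrary, $x_{\theta}\in\mathcal{X}$.

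I expect no serious obstacle here: the statement is a standard consequence of convexity being preserved under affine constraints, Cartesian products, coordinate projection (projecting the convex feasibility set in $(x,g,\delta,q)$-space onto the $x$-coordinates gives exactly $\mathcal{X}_{p}$), and finite intersection. The only points that require care are the correct handling of the quantifier $\forall p\in\mathcal{P}$ — the witnesses must be selected for one \emph{common} scenario before combining — and the bookkeeping confirming that an affine combination with unit-sum weights leaves the constant data $l$ and $p$ invariant.
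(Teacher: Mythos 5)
Your proof is correct and follows essentially the same route as the paper's: both decompose $\mathcal{X}=\bigcap_{p\in\mathcal{P}}\bigl\{x:\mathcal{F}(x;p)\neq\emptyset\bigr\}$ and reduce the claim to per-scenario convexity, which in both cases rests on the affinity of the balance constraint \eqref{balagg} together with the convexity of $\mathcal{G}\times\mathcal{D}\times\mathcal{Q}(p)$, followed by closure of convexity under intersection. The only difference is presentational: you establish per-scenario convexity by a direct convex combination of witnesses, whereas the paper invokes the image/inverse-image theorems of convex analysis (the affine image of $\mathcal{G}\times\mathcal{D}\times\mathcal{Q}(p)$, then its preimage under the summation map $x\mapsto\sum_{i}x^{(i)}$), which is the same argument in abstract form.
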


\begin{proof}
We see that $\mathcal{X}$ can be written as
\[
\mathcal{X}=\bigcap_{p\in \mathcal{P}}\mathcal{X}'(p),\quad\mathcal{X}'(p):=\bigl\{x\in \mathbb{R}^{|\mathcal{T}| \times |\mathcal{B}|}:\mathcal{F}(x;p)\neq\emptyset\bigr\}.
\]
We can prove the convexity of $\mathcal{X}'(p)$ as follows.
The feasible domain of  $\sum_{i=1}^{|\mathcal{B}|}x^{(i)}$, denoted by $\mathcal{X}''(p)$, is convex because it is an affine mapping of the convex domain $\mathcal{G}\times \mathcal{D}\times \mathcal{Q}(p)$.
As shown in \cite[Theorem~3.4]{rockafellar1970convex}, the inverse image of a convex domain under a linear mapping is convex.
This implies that $\mathcal{X}'(p)$ is convex because it is an inverse image of $\mathcal{X}''(p)$ under summation.
Therefore, $\mathcal{X}$ is convex because it is an intersection of convex domains.
\hfill $\blacksquare$
\end{proof}

For explanation, let us consider the meaning of ``$0\in \mathcal{X}$."
This means that the supply-demand balance inside the aggregator, i.e., $x=0$, or equivalently,
\begin{equation}\label{xeq0}
x^{(1)}= \cdots = x^{(|\mathcal{B}|)}=0,
\end{equation}
is surely realizable for ``any" renewable scenario $p$ arising from $\mathcal{P}$.
This can be seen as follows.
Suppose that the $s$th renewable scenario $p^{(s)}$ arises.
Then, the condition of $\mathcal{F}(0;p^{(s)})\neq\emptyset$ means that there exists at least one combination of the dispatchable variables such that the supply-demand balance is attained in the $s$th renewable scenario, i.e.,
\[
\spliteq{
\exists (g,\delta, q) &\in\mathcal{G} \times \mathcal{D} \times \mathcal{Q}(p^{(s)}) \quad {\rm s.t.} \\
& \textstyle{ 0 =g-l+p^{(s)}-q+\eta^{\rm out}\delta^{\rm out}-\frac{1}{\eta^{\rm in}}\delta^{\rm in}}.
}
\]

Therefore,  $\mathcal{F}(0;p)\neq\emptyset$ for all  $p\in \mathcal{P}$
means that, for each of all renewable scenarios, there exists at least one combination of the dispatchable variables such that the supply-demand balance inside the aggregator is attained, i.e., \eqref{xeq0} holds.
Note that $(g,\delta,q)$ is an ``adjustable" variable, meaning that it can vary such that the energy balance in \eqref{balagg} is attained for each of all scenarios $p\in \mathcal{P}$ under a spatio-temporal prosumption profile $x\in \mathcal{X}$.

As seen here, $\mathcal{X}$ can be understood as the collection of all possible spatio-temporal prosumption profiles that can be  surely realizable for each of all renewable scenarios.
In the rest of this paper, we suppose that each aggregator transacts only a prosumption profile $x$ involved in $\mathcal{X}$.
This corresponds to a ``market regulation" in which every aggregator is responsible for absorbing the uncertainty of his/her renewable power generation by managing his/her own dispatchable energy resources.
Proposition~\ref{propdom} shows that this market regulation of uncertainty absorption ensures the convexity of $\mathcal{X}$, crucial for marginal cost pricing.

\begin{remark}
The idiosyncrasy of $\mathcal{X}$ can be understood as the individual characteristics of intertemporal dispatchability that can be produced from energy resource management.
For a better understanding, let us consider a simple aggregator that is connected to only one bus and just has a conventional generator, i.e., $x=g$.
Then, it is easy to see that $\mathcal{X}=\mathcal{G}$.
On the other hand, for an aggregator that is connected to only one bus and just has a renewable energy resource, i.e., $x=p-q$, we see that
\begin{equation}\label{pvX}
\displaystyle \mathcal{X}=
\{
x\in \mathbb{R}^{|\mathcal{T}| }:0\leq x\leq p_{{\rm min}}\},\quad p_{{\rm min}}:=\min \mathcal{P}
\end{equation}
where the minimum of $\mathcal{P}$ is taken in the element-wise sense.
Note that $p_{{\rm min}}$ represents the lower envelope of all possible renewable scenarios.
This means that the uncertainty is removed just by curtailment; 
The larger the uncertainty is, the smaller the domain $\mathcal{X}$ is.
Such a waste of energy curtailment can be reduced, e.g., if the aggregator has a battery.
This battery introduction can actually enlarge the domain of $\mathcal{X}$.
In this sense, the ``size" of $\mathcal{X}$ can be understood as a level of intertemporal dispatchability, or flexibility, of the corresponding aggregator under the market regulation of uncertainty absorption.
\end{remark}

\subsection{Deduction of Prosumption Cost Function}

We consider assessing a cost of each spatio-temporal prosumption profile $x\in \mathcal{X}$.
This gives an answer to \textbf{Q1} in Section~\ref{seciq}.
To this end, we introduce the cost functions of dispatchable power generation and battery charge and discharge as
\begin{equation}\label{gbcos}
G:\mathcal{G}\rightarrow \mathbb{R},\quad D:\mathcal{D}\rightarrow \mathbb{R},
\end{equation}
which represent practically accessible cost functions, such as the fuel cost of generators and the deterioration cost of batteries.
With this notation, we can deduce a convex prosumption cost function as follows.

\begin{proposition}\label{thmconv}
Suppose that the generation cost function $G$ and the battery cost function $D$ are convex over convex domains $\mathcal{G}$ and $\mathcal{D}$.
Then, the prosumption cost function 
\begin{equation}\label{pcosfun}
F(x):=\displaystyle \max_{p\in \mathcal{P}}\min_{(g,\delta,q)\in \mathcal{F}(x;p)}\Bigl\{G(g)+D(\delta)\Bigr\}
\end{equation}
is convex over the convex domain $\mathcal{X}$.
\end{proposition}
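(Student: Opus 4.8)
The plan is to combine two standard facts from convex analysis: the pointwise maximum of finitely many convex functions is convex, and the partial minimization (infimal projection) of a jointly convex function is convex, as in \cite[Theorem~5.7]{rockafellar1970convex}. Since the scenario set $\mathcal{P}$ is finite, I would first write
\[
F(x)=\max_{p\in\mathcal{P}}\phi_{p}(x),\qquad
\phi_{p}(x):=\min_{(g,\delta,q)\in\mathcal{F}(x;p)}\bigl\{G(g)+D(\delta)\bigr\},
\]
so that $F$ is the maximum over the finite index set $\mathcal{P}$ of the inner value functions $\phi_{p}$. It then suffices to prove that each $\phi_{p}$ is convex in $x$ over the convex domain $\mathcal{X}'(p)=\{x:\mathcal{F}(x;p)\neq\emptyset\}$ already identified in the proof of Proposition~\ref{propdom}; convexity of $F$ over $\mathcal{X}=\bigcap_{p\in\mathcal{P}}\mathcal{X}'(p)$ is then immediate.

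To establish convexity of $\phi_{p}$ for a fixed scenario $p$, I would introduce the joint set
\[
S_{p}:=\bigl\{(x,g,\delta,q):(g,\delta,q)\in\mathcal{G}\times\mathcal{D}\times\mathcal{Q}(p),\ \mbox{\eqref{balagg} holds}\bigr\}.
\]
The membership $(g,\delta,q)\in\mathcal{G}\times\mathcal{D}\times\mathcal{Q}(p)$ describes a convex set in the $(g,\delta,q)$ coordinates (a product of convex sets), trivially extended over all $x$, while the balance relation \eqref{balagg}, with $p$ and $l$ held fixed, is an affine equality in the joint variable $(x,g,\delta,q)$. Hence $S_{p}$ is the intersection of a convex set with an affine subspace and is therefore convex. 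Moreover the objective $(x,g,\delta,q)\mapsto G(g)+D(\delta)$ is jointly convex, since $G$ and $D$ are convex by hypothesis and the map ignores $x$ and $q$.

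Observing that $\phi_{p}(x)=\inf\{G(g)+D(\delta):(x,g,\delta,q)\in S_{p}\}$, I would recognize $\phi_{p}$ as the marginal obtained from the jointly convex extended-real-valued function $(x,g,\delta,q)\mapsto G(g)+D(\delta)$ restricted to the convex set $S_{p}$ by minimizing out $(g,\delta,q)$. The partial-minimization theorem then yields convexity of $\phi_{p}$ in $x$, and taking the maximum over $p\in\mathcal{P}$ completes the argument.

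The step I expect to be delicate is the legitimacy of the partial-minimization conclusion: one must check that the jointly convex objective, together with the indicator of $S_{p}$, is a \emph{proper} convex function on the joint space, i.e.\ that minimizing out $(g,\delta,q)$ does not produce the value $-\infty$, so that the resulting marginal is a genuine convex function finite on $\mathcal{X}$. This reduces to verifying that $\phi_{p}$ is bounded below on $\mathcal{X}$; under the stated convexity and the boundedness of $\mathcal{G}$ and $\mathcal{D}$ implied by the generation and battery-capacity constraints, the infimum is finite and attained, which both secures properness and justifies writing $\min$ rather than $\inf$ in \eqref{pcosfun}.
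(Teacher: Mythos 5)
Your proof is correct and takes essentially the same route as the paper's: both express $F$ as the pointwise maximum over $\mathcal{P}$ of the scenario-wise value functions and prove each of these is convex in $x$ by an infimal-projection argument --- your joint-space partial-minimization step is precisely the content of \cite[Theorem~5.7]{rockafellar1970convex}, which the paper invokes in the equivalent form of images and inverse images of convex functions under affine mappings. The only differences are cosmetic: you additionally flag properness of the marginal (a technical point the paper passes over silently), and you rely on finiteness of $\mathcal{P}$, whereas the paper's pointwise-supremum step is stated so as to cover arbitrary, possibly continuous, scenario sets.
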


\begin{proof}
We see that $\sum_{i=1}^{|\mathcal{B}|}x^{(i)}$ in \eqref{balagg} is given as an affine mapping of $(g,\delta,q)$, and $x$ is an inverse image of $\sum_{i=1}^{|\mathcal{B}|}x^{(i)}$ under summation.
As shown in \cite[Theorem~5.7]{rockafellar1970convex}, the image of a convex function under an affine mapping as well as the inverse image of a convex function under an affine mapping are both convex functions.
This implies that $F'$ defined as 
\begin{equation}\label{Fprime}
F'(x;p):=\displaystyle \min_{(g,\delta,q)\in \mathcal{F}(x;p)}\Bigl\{G(g)+D(\delta)\Bigr\}
\end{equation}
is convex with respect to $x$ for each $p\in \mathcal{P}$.
In addition, the pointwise supremum of an arbitrary collection of convex functions is convex, as shown in  \cite[Theorem~5.5]{rockafellar1970convex}.
Thus, the convexity of $F$ is proven for any set $\mathcal{P}$, which may be discrete or continuous. 
\hfill $\blacksquare$
\end{proof}

The meaning of the prosumption cost function can be explained as follows.
For clarity, we first discuss the value of $F(0)$, i.e., the cost to realize the spatio-temporal prosumption profile $x=0$.
Suppose that the $s$th renewable scenario $p^{(s)}$ arises.
Then, $\mathcal{F}(0;p^{(s)})$, defined as in \eqref{calfd}, represents the set of all possible combinations of $g\in \mathcal{G}$,  $\delta\in \mathcal{D}$,  and $q\in \mathcal{Q}(p^{(s)})$ such that the supply-demand balance is attained inside the aggregator.
Note that there can be infinite many combinations of the dispatchable variables satisfying this supply-demand balance.
Thus, we can use this remaining degree of freedom to minimize the sum of the generation and battery costs, as in the minimization part of \eqref{pcosfun}.

Let us denote the minimum value by $F'(0;p^{(s)})$, for which $F'$ is defined as in \eqref{Fprime}.
Note that both $x$ and $p$ in \eqref{Fprime} are involved as parameters in the constraint of the minimization problem.
This means that $(g,\delta,q)$ is an ``adjustable" variable.
Therefore, the minimizer $(g^*,\delta^*,q^*)$ is a function dependent on $p\in \mathcal{P}$ and $x\in \mathcal{X}$.
The value of $F'(0;p^{(s)})$ corresponds to the minimum cost to attain the supply-demand balance under the specific renewable scenario $p^{(s)}$.
It should be noted that simultaneous charging and discharging of batteries is to be automatically avoided as a result of battery cost minimization.
This is because such a redundant operation causes unnecessary energy loss unless both charge and discharge efficiencies are one.

Finally, the value of $F(0)$ is defined as the worst cost among all possible renewable scenarios, i.e.,
\[
F(0)=\max_{s \in\{1,\ldots,m\}}F'(0;p^{(s)}).
\]
As seen here, to find the value of $F(0)$, or, more generally, to find the value of $F(x)$ for a ``fixed" parameter $x$, is a single max-min program.
Note that this max-min program is different from a standard robust optimization problem \cite{ben1998robust} defined as a ``min-max" program for minimizing the worst-case cost.
It should be further noted that the worst-case scenario $p^{(s^*)} \in \mathcal{P}$ is a function dependent on $x \in \mathcal{X}$.
This means that the worst-case scenario is not unique in general, but it depends on how much amounts of energy are bought or sold at the respective time spots.

The continuous function $F(x)$ can be calculated as the collection of all minimum costs to realize respective prosumption profiles, provided that each $x$ is an element of $\mathcal{X}$.
Note that the closed form of $F$ cannot be written down in general; 
It encapsulates complex information of different energy resources, such as the uncertainty of renewable power generation and the physical constraints of generators and batteries.


\begin{remark}
One may think that the convexity of $F$ is not surprising because of assuming the convexity of $G$ and $D$.
For emphasis, we remark that the prosumption cost function is defined based on a ``parameterized'' max-min program in which the variable $x$ is involved as a ``continuous parameter" in the equality constraint \eqref{balagg}.
In fact, the max-min program in the right-hand side of \eqref{pcosfun} can be reduced to a set of scenario-wise convex programs if the parameter $x$ is fixed.
In other words,  $F$ is defined as the collection of an infinite number of scenario-wise convex programs.
The significance of Proposition~\ref{thmconv} is to show, by virtue of convex analysis theory \cite{rockafellar1970convex}, that such a complicated function is convex with respect to the multi-valued parameter $x$.
It is interesting to note that, even though the maximization with respect to $p$ may be a non-convex program for a continuous set $\mathcal{P}$, being possibly non-convex, the resultant function $F$ is indeed convex with respect to $x$.
To the best of the authors' knowledge, such an idea of formulating the cost function, or, at the same time, benefit function, based on a parameterized max-min program has not been reported in the literature of spatio-temporal energy market modeling.
\end{remark}

\begin{remark}
In Proposition~\ref{thmconv}, the worst case of renewable scenarios is considered.
This can be replaced with the expectation with respect to renewable scenarios as
\begin{equation}\label{pcosfun2}
\tilde{F}(x):=\displaystyle \sum_{s=1}^{m}\pi^{(s)}
\min_{(g,\delta,q)\in \mathcal{F}(x;p^{(s)})}\Bigl\{G(g)+D(\delta)\Bigr\}
\end{equation}
where $\pi^{(s)}$ denotes the realization probability of the renewable scenario $p^{(s)}$.
This $\tilde{F}$ is also shown to be convex.
Note that the variable $x$ is involved here as a parameter of the expected cost minimization problem in the right-hand side of \eqref{pcosfun2}.
Even though this $\tilde{F}$ is defined in terms of expectation, every prosumption profile $x\in \mathcal{X}$ is surely realizable for all possible renewable scenarios $p\in \mathcal{P}$, because $\mathcal{X}$ is defined as the robust feasible domain   in \eqref{realsp}.
This is categorized into a two-stage stochastic programing framework.  
\end{remark}

\begin{remark}
We give a remark on the cost or penalty of renewable power curtailment.
In Proposition~\ref{thmconv}, we do not explicitly evaluate the cost of $q$.
However, this does not mean that the renewable power curtailment results in no loss of profit.
For clarity, we consider an aggregator that is connected to only one bus and just has a renewable resource, i.e., $x=p-q$.
Then, 
\[
F(x)=0,\quad \forall x\in \mathcal{X}
\] 
with $\mathcal{X}$ given as in \eqref{pvX}.
This means that every $x$ involved in $\mathcal{X}$ is equally realizable without causing any cost.
Consider the personal profit maximization with $J$ defined as in \eqref{profJ}.
Whenever the resultant clearing price profile is positive, i.e., $\lambda^{*}>0$, the profit-maximizing prosumption profile is found as $x^{*}=p_{{\rm min}}$.
This shows that the least renewable power curtailment is naturally obtained as the rational decision in accordance with the premise of profit maximization.
\end{remark}

\subsection{Spatio-Temporal Energy Market Model}

Because each prosumption cost function is convex as shown in Proposition~\ref{thmconv}, the socially optimal market clearing problem \eqref{marketp} is found to be an adjustable, or two-stage, robust convex program. 
In particular, \eqref{marketp} is a first-stage problem to find the optimal non-adjustable variable $(x_{\alpha}^{*})_{\alpha\in \mathcal{A}}$, while the family of
\[
F_{\alpha}(x_{\alpha})=\displaystyle \max_{p_{\alpha}\in \mathcal{P_{\alpha}}}\min_{(g_{\alpha},\delta_{\alpha},q_{\alpha})\in \mathcal{F}_{\alpha}(x_{\alpha};p_{\alpha})}\Bigl\{G_{\alpha}(g_{\alpha})+D_{\alpha}(\delta_{\alpha})\Bigr\}
\]
for all aggregators is the family of second-stage problems to find the optimal adjustable variables $(g_{\alpha}^*,\delta_{\alpha}^*,q_{\alpha}^*)_{\alpha\in \mathcal{A}}$ such that the uncertainty of renewable resources is absorbed inside individual aggregators on their own responsibility.

Based on the convexity of the market model, we can see a clear relation between microeconomics and convex analysis.
To see this, using $\mathcal{C}_{\alpha}$ in \eqref{augcalC}, which denotes the feasible domain with respect to the transmission line constraints, we consider the Lagrangian dual problem
\begin{equation}\label{lagprob}
\displaystyle \max_{\lambda}
\sum_{\alpha= 1}^{|\mathcal{A}|} 
\min_{
x_{\alpha} \in \mathcal{C}_{\alpha}
}\hspace{0pt}
\Bigl\{F_{\alpha}(x_{\alpha})-\langle\lambda,x_{\alpha} \rangle\Bigr\},
\end{equation}
which is equivalent to \eqref{marketp} owing to the strong duality.
Note that this $\lambda$ is found as a spatio-temporal variable in \eqref{auglam}, for which the inner product term is calculated as
\[
\langle\lambda,x_{\alpha} \rangle = \sum_{i=1}^{| \mathcal{B} |} \langle\lambda^{(i)},x^{(i)}_{\alpha} \rangle.
\]
As shown in the following proposition, the clearing price profile, denoted by $\lambda^{*}$, can achieve the maximal personal profit of every aggregator under the prosumption balance constraint.

\begin{proposition}\label{proppc}
Consider the Lagrangian dual problem \eqref{lagprob}.
With the profit function $J_{\alpha}$ defined as in \eqref{profJ}, let 
\begin{equation}\label{bidfun}
\hspace{6pt}
\boldsymbol{x}_{\alpha}(\lambda)\!:=\!
\bigl\{
x_{\alpha}\!\in\!\mathcal{X}_{\alpha}\!:\!J_{\alpha}(x_{\alpha};\lambda)\!\geq\!J_{\alpha}(x'_{\alpha};\lambda),\ \!\forall x'_{\alpha}\!\in\!\mathcal{X}_{\alpha}
\bigr\}\hspace{-20pt}
\end{equation}
be the set of all spatio-temporal prosumption profiles that attain the maximum profit under a spatio-temporal price profile $\lambda$.
Then, $(x_{\alpha}^{*})_{\alpha\in \mathcal{A}}$ and $\lambda^{*}$ are primal and dual solutions to \eqref{lagprob} if and only if 
\begin{equation}\label{condsol}
x_{\alpha}^{*}\in\boldsymbol{x}_{\alpha}(\lambda^{*}),\ \ \forall\alpha\in \mathcal{A},\displaystyle \quad{\rm and}\quad
\sum_{\alpha= 1}^{|\mathcal{A}|} 
x_{\alpha}^{*}=0.
\end{equation}
\end{proposition}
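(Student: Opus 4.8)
The plan is to recognize \eqref{lagprob} as the ordinary Lagrangian dual of the primal market clearing problem \eqref{marketp}, with $\lambda$ playing the role of the multiplier attached to the prosumption balance constraint $\sum_\alpha x_\alpha = 0$, and then to read off \eqref{condsol} as the saddle-point (equivalently, KKT) conditions of this convex program. The first observation I would record is the sign bookkeeping: since the profit is $J_\alpha(x_\alpha;\lambda) = \langle\lambda, x_\alpha\rangle - F_\alpha(x_\alpha)$, the inner objective in \eqref{lagprob} satisfies $F_\alpha(x_\alpha) - \langle\lambda, x_\alpha\rangle = -J_\alpha(x_\alpha;\lambda)$, so that minimizing the inner objective over the feasible prosumption set is the same as maximizing the personal profit $J_\alpha(\cdot;\lambda)$. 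Consequently, the set of minimizers of the $\alpha$-th inner problem, for a fixed price $\lambda$, is exactly the set $\boldsymbol{x}_\alpha(\lambda)$ of \eqref{bidfun}. At this point I would make explicit that $F_\alpha$ is understood to take the value $+\infty$ outside its effective domain $\mathcal{X}_\alpha$, so that the inner minimization over $\mathcal{C}_\alpha$ is effectively performed over $\mathcal{C}_\alpha \cap \mathcal{X}_\alpha$, i.e.\ over the feasible prosumption set over which $J_\alpha$ is maximized in \eqref{bidfun}.

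Next I would invoke convexity and strong duality. By Proposition~\ref{propdom} each $\mathcal{X}_\alpha$ is convex and by Proposition~\ref{thmconv} each $F_\alpha$ is convex on it; together with the linear balance constraint and the convex transmission-line sets $\mathcal{C}_\alpha$, problem \eqref{marketp} is a convex program, and (as already asserted in the text preceding the proposition) it has zero duality gap with \eqref{lagprob}. Under zero duality gap the standard characterization applies: a primal-feasible $(x_\alpha^*)$ and a dual-feasible $\lambda^*$ are jointly optimal if and only if the pair is a saddle point of the Lagrangian $\mathcal{L}((x_\alpha),\lambda) = \sum_\alpha [F_\alpha(x_\alpha) - \langle\lambda, x_\alpha\rangle]$, minimized over the primal variables in $\prod_\alpha \mathcal{C}_\alpha$ and maximized over all dual variables $\lambda$.

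For the ``if'' direction I would verify the two saddle inequalities directly from \eqref{condsol}. The condition $x_\alpha^* \in \boldsymbol{x}_\alpha(\lambda^*)$ for all $\alpha$ is, by the first paragraph, precisely the statement that $(x_\alpha^*)$ minimizes $\mathcal{L}(\cdot,\lambda^*)$ over $\prod_\alpha \mathcal{C}_\alpha$, because $\mathcal{L}$ separates across aggregators. The balance condition $\sum_\alpha x_\alpha^* = 0$ makes $\mathcal{L}((x_\alpha^*),\lambda)$ independent of $\lambda$, so the remaining saddle inequality holds with equality; these two facts are exactly the saddle-point conditions, hence $(x_\alpha^*)$ and $\lambda^*$ are primal-dual optimal. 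For the ``only if'' direction I would use zero duality gap: primal feasibility gives $\sum_\alpha x_\alpha^* = 0$, and equality of optimal values rewrites, using this balance, as $\sum_\alpha [F_\alpha(x_\alpha^*) - \langle\lambda^*, x_\alpha^*\rangle] = \sum_\alpha \min_{x_\alpha \in \mathcal{C}_\alpha}\{F_\alpha(x_\alpha) - \langle\lambda^*, x_\alpha\rangle\}$; since each summand on the left dominates the corresponding minimum while the totals agree, every summand must attain its minimum, i.e.\ $x_\alpha^* \in \boldsymbol{x}_\alpha(\lambda^*)$.

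The conceptual core is entirely standard Lagrangian duality, so the main obstacle is not the argument itself but the bookkeeping needed to make it airtight. I expect the delicate points to be (i) matching the feasible domains, ensuring that the extended-value convention for $F_\alpha$ makes the inner minimizer set of \eqref{lagprob} coincide with the profit-maximizer set $\boldsymbol{x}_\alpha(\lambda^*)$ and not a larger or smaller set; and (ii) confirming that the $F_\alpha$ built from the parameterized max-min program in Proposition~\ref{thmconv} are \emph{proper} convex functions, so that the duality machinery and the term-by-term equality argument are legitimate. The separability of $\mathcal{L}$ across aggregators is what lets the single coupled optimality condition factor into the per-aggregator profit-maximization statements, and I would emphasize this decoupling as the structural reason that the clearing price $\lambda^*$ simultaneously clears the market and maximizes every aggregator's profit.
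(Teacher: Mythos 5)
Your proof is correct, but it takes a genuinely different route from the paper's. The paper works through Fenchel conjugacy: it defines $\overline{F}_{\alpha}(\lambda):=\sup_{x_{\alpha}\in\mathcal{C}_{\alpha}}\{\langle\lambda,x_{\alpha}\rangle-F_{\alpha}(x_{\alpha})\}$, writes the first-order optimality condition of the dual maximization \eqref{lagprob} as the generalized equation $0\in\sum_{\alpha}\partial\overline{F}_{\alpha}(\lambda^{*})$, and reduces everything to the single identity $\boldsymbol{x}_{\alpha}(\lambda)=\partial\overline{F}_{\alpha}(\lambda)$, which it proves by combining the optimality condition $\lambda\in\partial F_{\alpha}(x_{\alpha}^{\star})$ for a profit maximizer with the fact that $\partial\overline{F}_{\alpha}$ is the inverse mapping of $\partial F_{\alpha}$ \cite[Theorem~23.5]{rockafellar1970convex}. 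You avoid conjugates and subdifferentials entirely: you use the saddle-point characterization of primal--dual optimality under strong duality, exploit separability of the Lagrangian across aggregators, and close the ``only if'' direction with a term-by-term argument (each summand dominates its minimum while the totals agree, so each must attain it). What your route buys is elementarity and completeness --- both directions of the equivalence are argued explicitly, whereas the paper's proof is terse about how the generalized equation, which concerns $\lambda^{*}$ alone, ties back to the particular primal solutions $x_{\alpha}^{*}$. What the paper's route buys is the structural identity $\boldsymbol{x}_{\alpha}(\lambda)=\partial\overline{F}_{\alpha}(\lambda)$: the profit-maximizing (bid) correspondence is exactly the subdifferential of a convex function, hence a monotone map, which fits the paper's broader theme connecting microeconomics and convex analysis (cf.\ the discussion around \eqref{mulpF}). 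Note finally that both proofs share the domain-bookkeeping wrinkle you flag --- \eqref{lagprob} and the conjugate optimize over $\mathcal{C}_{\alpha}$ while \eqref{bidfun} optimizes over $\mathcal{X}_{\alpha}$ --- so the extended-value convention you state ($F_{\alpha}=+\infty$ outside $\mathcal{X}_{\alpha}$, effective domain $\mathcal{C}_{\alpha}\cap\mathcal{X}_{\alpha}$) is needed to make the paper's argument airtight just as much as yours.
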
\vspace{-6pt}

\begin{proof}
The convex conjugate of $F_{\alpha}$ is defined as
\[
\overline{F}_{\alpha}(\lambda):=\sup_{x_{\alpha} \in \mathcal{C}_{\alpha} }\left\{\langle\lambda,x_{\alpha}\rangle-F_{\alpha}(x)\right\},
\]
which is called the Legendre-Fenchel transformation \cite{rockafellar1970convex}.
For the maximization in \eqref{lagprob}, its first-order optimality condition yields the generalized equation
\begin{equation}\label{geneq}
0\displaystyle \in\sum_{\alpha\in \mathcal{A}}\partial\overline{F}_{\alpha}(\lambda^{*}).
\end{equation}
To prove the equivalence between \eqref{condsol} and \eqref{geneq}, it suffices to show the identity of 
\begin{equation}\label{xeqF}
\boldsymbol{x}_{\alpha}(\lambda)=\partial\overline{F}_{\alpha}(\lambda).
\end{equation}
Consider some $x_{\alpha}^{\star}\in\boldsymbol{x}_{\alpha}(\lambda)$, i.e., a maximizer of $J_{\alpha}$ under a given $\lambda$.
Then, its optimality condition yields $\lambda\in\partial F_{\alpha}(x_{\alpha}^{\star})$.
Because $\partial\overline{F}_{\alpha}$ is shown to be the inverse mapping of $\partial F_{\alpha}$; see \cite[Theorem~23.5]{rockafellar1970convex}, it is equivalent to $x_{\alpha}^{\star}\in\partial\overline{F}_{\alpha}(\lambda)$.
Hence, \eqref{xeqF} is proven.
\hfill $\blacksquare$
\end{proof}

\begin{remark}
We again remark the significance of the proposed market model.
One may think that an economic dispatch problem with uncertain renewable resources can be formulated as a  robust optimization problem, such as robust UC problems \cite{jiang2012robust,bertsimas2013adaptive,zheng2015stochastic}, where all available resources in the grid can be involved as the decision variables.
However, in such a centralized formulation, an owner of renewable resources can be a ``free rider,'' who does not have responsibility for uncertainty management. 
In particular, the uncertainty of renewable resources can be covered by dispatchable resources owned by some other aggregators.
In general, it is not easy to determine a reasonable price or side payment for the use of such resources of others,  because a unique price invariant for renewable scenarios is difficult to determine.
This actually hampers the spread of storage resources, such as battery systems, to absorb uncertainty of renewable energy.
In contrast, our market model imposes self-responsibility to uncertainty management on each aggregator.
This enables to create synergy between renewable and storage resources;
Each aggregator has an incentive to own a storage resource in conjunction with a renewable resource because reducing a waste of energy curtailment due to uncertainty makes his/her own profit higher.
\end{remark}


\begin{figure*}[t]
\centering
\includegraphics[width=170mm]{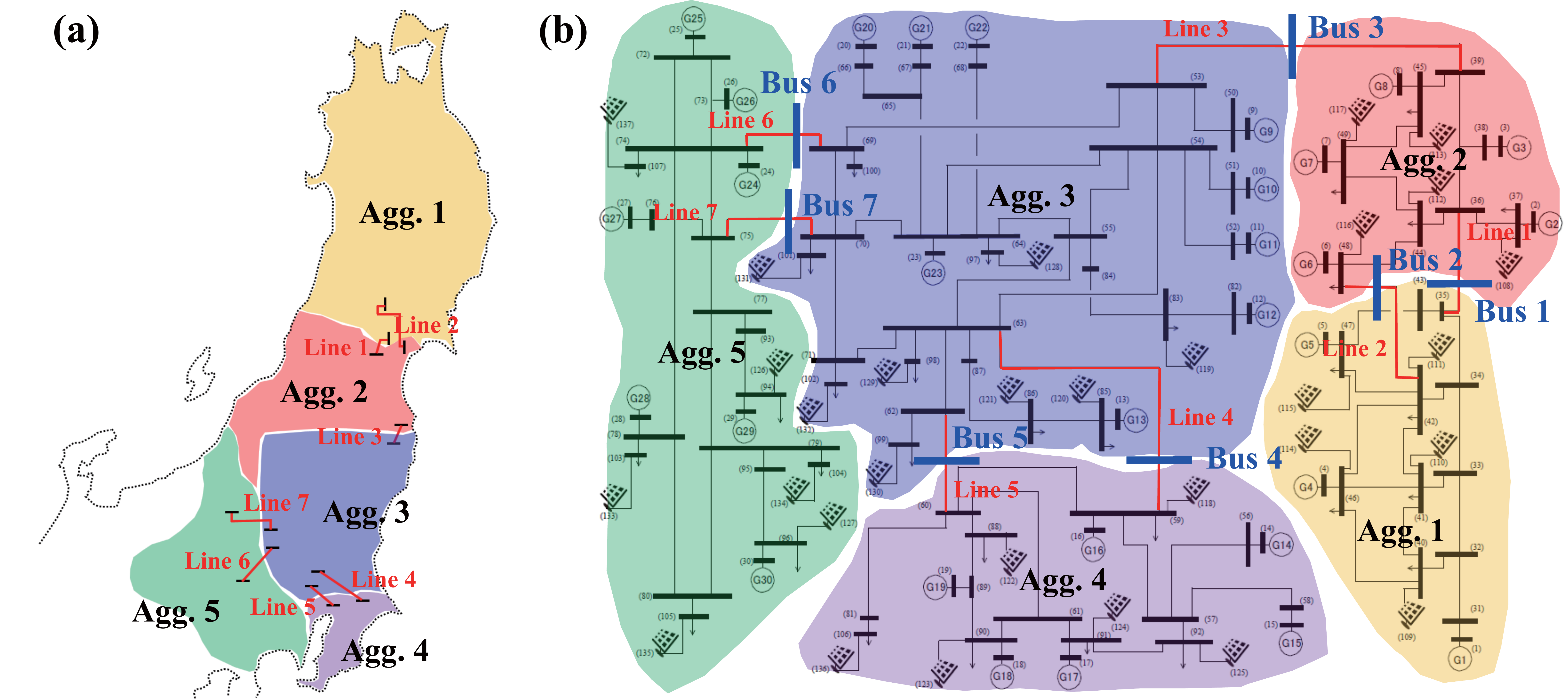}

\vspace{3pt}
\caption{
\textbf{(a)} North east area of Japan divided into five subareas.
\textbf{(b)} PV-integrated IEEJ EAST 30-machine model.
The subareas labeled as Agg.~$\alpha$ and the major transmission lines labeled as Line~$i$  are compatible in both subfigures.
}
\label{fignetworks}
\vspace{8pt}
\end{figure*}

\begin{table}[t]\centering
\caption{List of thermal generators.}\vspace{9pt}
\tiny
\begin{tabular}{|c || c|c|c|}\hline
Label: Type & Capacity [MW] & Cost [JPY/kWh] & Duration [h]  \\ \hline \hline
G1: Coal (C) & 3000 & 1.95 & 12 \\ \hline
G2: Coal (A) & 2500 & 4.23 & 12 \\ \hline
G3: Coal (B) & 2000 & 3.58 & 12 \\ \hline
G4: LNG (A) & 2250 & 5.48 & 1 \\ \hline
G5: Coal (D) & 1500 & 4.88 & 12 \\ \hline
G6: Oil (A) & 500 & 12.39 & 1 \\ \hline
G7: Oil (B) & 500 & 11.31 & 1 \\ \hline
G8: LNG (B) & 1000 & 5.40 & 1 \\ \hline
G9: Coal (A) & 2760 & 4.23 & 12 \\ \hline
G10: LNGCC (C) & 8312 & 1.91 & 6 \\ \hline
G11: LNGCC (B) & 4500 & 4.07 & 1 \\ \hline
G12: LNGCC (A) & 3700 & 2.48 & 1 \\ \hline
G13: LNG (C) & 2400 & 6.02 & 1 \\ \hline
G14: LNGCC (A) & 1265 & 2.48 & 1 \\ \hline
G15: LNGCC (C) & 1710 & 1.91 & 6 \\ \hline
G16: Coal (D) & 1160 & 4.88 & 12 \\ \hline
G17: LNG (B) & 820 & 5.40 & 1 \\ \hline
G18: Oil (B) & 831 & 11.31 & 1 \\ \hline
G19: Oil (A) & 957 & 12.39 & 1 \\ \hline
G20: LNG (B) & 1050 & 5.40 & 1 \\ \hline
G21: LNG (A) & 1000 & 5.48 & 1 \\ \hline
G22: Oil (C) & 900 & 11.48 & 1 \\ \hline
G23: Oil (B) & 675 & 11.31 & 1 \\ \hline
G24: Oil (A) & 751 & 12.39 & 1 \\ \hline
G25: Coal (C) & 4450 & 1.95 & 12 \\ \hline
G26: Oil (B) & 1000 & 11.31 & 1 \\ \hline
G27: LNG (C) & 1340 & 6.02 & 1 \\ \hline
G28: LNGCC (B) & 1560 & 4.07 & 1 \\ \hline
G29: Coal (B) & 3130 & 3.58 & 12 \\ \hline
G30: Oil (C) & 1250 & 11.48 & 1 \\ \hline
\end{tabular}
\label{tablist}\vspace{3pt}
\end{table}

\section{Numerical Analysis}\label{secnum}

\subsection{Simulation Setup}

\subsubsection{PV-Integrated IEEJ EAST 30-Machine Model }

In this section, we conduct a numerical analysis to investigate what level of battery penetration can be realized from the viewpoints of not only social economy but also personal economy, under the market regulation in which each aggregator is responsible for absorbing the uncertainty of his/her renewable power generation.
We suppose a spatio-temporal energy market in the north east area of Japan depicted as in Fig.~\ref{fignetworks}\textbf{(a)}, which is divided into five subareas labeled as Agg.~1--Agg.~5.
A bulk power system model of this area is found in the literature, called the IEEJ EAST 30-machine model \cite{IEEJEast30}, whose network structure is shown in Fig.~\ref{fignetworks}\textbf{(b)}.
We modify it as integrating photovoltaic (PV) generators at buses around residential areas.
In this numerical simulation, we suppose that each subarea is managed by a corresponding aggregator who participates in the energy market.
Furthermore, we suppose that, prosumption energy amounts are transacted at 24 time spots, and  the transactions of prosumption profiles are conducted at seven buses, denoted as Buses~1--7 in Fig.~\ref{fignetworks}\textbf{(b)}.
Therefore, each spatio-temporal prosumption profile $x_{\alpha}$ in \eqref{augpro} and the clearing price profile $\lambda$ in \eqref{auglam} are both $(24\times 7)$-dimensional.

Each aggregator is supposed to have several types of generators, loads, batteries, and PV generators, as being compatible with the mathematical modeling in Section~\ref{secpfm}.
In the following, we describe the details of Aggregator~1 because those of the other aggregators are similar.
The supplementary material of this paper lists the complete information of all aggregators, while explaining the practical meanings of parameter settings, which are based on real data sets.

\subsubsection{Setup of Generators}


The fuel cost and output capacity of each generator are listed in Table~\ref{tablist}.
In this table, ``Duration" corresponds to the period of output regulation.
For example, the output of Coal (C) is regulated every 12 hours, i.e., its output is fixed during the day and night time.

As seen from Fig.~\ref{fignetworks}\textbf{(b)}, Aggregator~1 has three generators, labeled as G1, G4, and G5.
Let $g_1^{(1)}$, $g_1^{(2)}$, and $g_1^{(3)}$ denote the power generation profiles of those generators.
Then, the generation cost function of Aggregator~1 is given as
\[
\spliteq{
G_{1}&(g_{1})=  
\displaystyle{\min_{
(g_1^{(1)}, g_1^{(2)}, g_1^{(3)}) 
}}  
\sum_{k=1 }^{3} c^{(k)}_1 {\mathds 1}^{\sf T}g^{(k)}_1\\
&{\rm s.t.}\quad g_{1}=\sum_{ k=1 }^{3} g^{ (k) }_1,\: \: 
(g_1^{(1)}, g_1^{(2)}, g_1^{(3)}) \in \mathcal{G}^{(1)}_1 \times \mathcal{G}^{(2)}_1 \times \mathcal{G}^{(3)}_1
}
\]
where the fuel cost and the output limit of the $k$th generator are denoted by $c^{(k)}_1$[JPY/kWh] and $\mathcal{G}^{(k)}_1$, respectively.

\subsubsection{Setup of Loads}

The load profile of each aggregator is given as a typical profile of consumers such as residences, commercial facilities, and factories.
The aggregate of all aggregators' load profiles, which amounts to 950.6[GWh], is plotted by the black dotted line of the upper envelope of Fig.~\ref{figover1}\textbf{(a1)}, which is identical to those in Figs.~\ref{figover1}\textbf{(a2)}--\textbf{(a4)}, and those in Figs.~\ref{figover2}--\ref{figover3r}\textbf{(a1)}--\textbf{(a4)}.
The load profile of Aggregator~1 is plotted by the black line with circles in Fig.~\ref{figagg1}\textbf{(a1)}, which is the same as those in Figs.~\ref{figagg1}\textbf{(a2)} and \textbf{(a3)}.
This load profile corresponds to the aggregate of 8.19 million residences in terms of consumption energy amounts.

\subsubsection{Setup of PV Generators}

We consider three penetration levels of PV generators, referred to as \textsf{PV(1)}, \textsf{PV(2)}, and \textsf{PV(3)}.
This PV penetration level is based on the premise of the feed-in tariff scheme in Japan.
In particular, according to the roadmap of Japan Photovoltaic Energy Association \cite{JPEA2017PV}, we set \textsf{PV(1)}--\textsf{PV(3)} as being comparable to the estimations in 2020, 2030, and 2050, respectively.
In this simulation, ten scenarios are considered for each aggregator, i.e., $m=10$ in \eqref{PVsc}.

At the level of \textsf{PV(1)}, the aggregate average of all PV scenarios subtracted from the aggregate of all load profiles, i.e., 
\[
\sum_{\alpha =1}^{5} \left\{l_{\alpha}-\frac{1}{10}\sum_{s=1}^{10}p_{\alpha}^{(s)}\right\},
\]
is plotted by the green dotted line in Fig.~\ref{figover1}\textbf{(a1)}, which is identical to those in Figs.~\ref{figover1}\textbf{(a2)}--\textbf{(a4)}.
In a similar way, the corresponding profiles are plotted in Figs.~\ref{figover2}\textbf{(a1)}--\textbf{(a4)} and Figs.~\ref{figover3}\textbf{(a1)}--\textbf{(a4)} at the levels of  \textsf{PV(2)} and \textsf{PV(3)}, respectively.
Furthermore, the PV scenarios of Aggregator~1 are plotted by the thin solid lines in Figs.~\ref{figagg1}\textbf{(a1)}--\textbf{(a3)}, which correspond to \textsf{PV(1)}--\textsf{PV(3)}, respectively.

\subsubsection{Setup of Batteries}

The basic objective of this numerical analysis is to investigate how the penetration levels of batteries affect market results.
The battery penetration level is defined as follows.
The aggregate of all loads, i.e., $\sum_{\alpha =1}^{5} l_{\alpha}$, can be regarded as 95 million residential loads in terms of consumption energy.
Based on this, we say that the battery penetration level is $r$\% if each of $r$\% of 95 million residences has a battery with $\pm$7[kW] inverter capacity and 14[kWh] battery capacity.
Its charge and discharge efficiencies are supposed to be $0.95$.

The battery cost function $D_{\alpha}$ is specifically given as follows.
Let $\sigma_{\alpha}^{\rm 0}$ denote the initial amount of stored energy, i.e., the state of charge (SOC), which is defined as the deviation from a neutral value.
It is reasonable to suppose that a higher level of the final SOC is more preferable than a lower level, and vice versa.
To take into account this aspect, we suppose that each aggregator assesses the value of the final SOC by 
\begin{equation}\label{spD}
D_{\alpha}(\delta_{\alpha})
=-d\left(\sigma_{\alpha}^{\rm fin}(\delta_{\alpha})\right)
\end{equation}
where $\sigma_{\alpha}^{\rm fin}$ denotes the final SOC defined by
 \[
 \sigma_{\alpha}^{\rm fin}(\delta_{\alpha})
:=
\sigma_{\alpha}^{\rm 0}+{\mathds 1}^{\sf T}(\delta^{\rm in}_{\alpha}-\delta^{\rm out}_{\alpha}),
 \]
and $d:\mathbb{R}\rightarrow \mathbb{R}$ is a concave function given as
\[
d(\sigma):=
\left\{
\begin{array}{cl}
a_{4}(\sigma-\overline{\sigma})+a_{3}\overline{\sigma},&\hspace{-3pt}\quad\overline{\sigma}\leq \sigma,\vspace{-3pt}\\
a_{3}\sigma,&\hspace{-3pt}\quad 0\leq \sigma<\overline{\sigma},\vspace{-3pt}\\
a_{2}\sigma,&\hspace{-3pt}\quad\underline{\sigma}\leq \sigma<0,\vspace{-3pt}\\
a_{1}(\sigma-\underline{\sigma})+a_{2}\underline{\sigma},&\hspace{-3pt}\quad \sigma<\underline{\sigma}.
\end{array}
\right.
\]
This concave function $d$, the graph of which is depicted in Fig.~\ref{figgraphd}, reflects a basic strategy of aggregators for battery management.
For example, if the temporal average of a clearing price is between $a_3$ and $a_2$, then each aggregator aims at restoring the final SOC to 50\%, or if it is between $a_2$ and $a_1$, then each aggregator aims at regulating the final SOC to $\underline{\sigma}$ as selling the battery stored energy.

\begin{figure}[t]
\centering
\includegraphics[width=45mm]{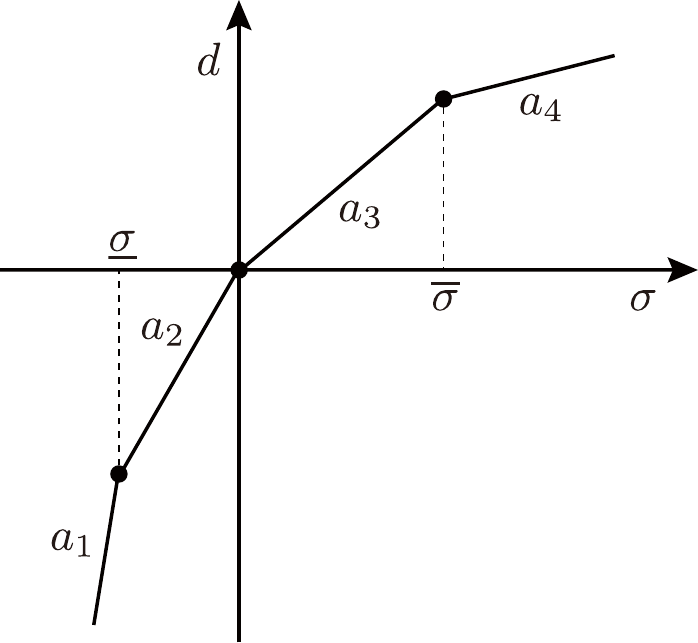}
\vspace{3pt}
\caption{Graph of $d$. The slope of each segment is denoted as $a_i$.}
\label{figgraphd}
\vspace{8pt}
\end{figure}

In this simulation, we set $\ a_{1}=11$,  $a_{2}=8$,  $a_{3}=4$, $a_{4}=1$, and $\sigma_{\alpha}^{\rm 0}=0$ for all $\alpha=1,\ldots,5$.
The values of $\underline{\sigma}$ and $\overline{\sigma}$ are set to represent 37.5\% and 62.5\% of the full SOC.


\begin{table}[t]\centering
\caption{List of transmission line capacities.}\vspace{9pt}
\tiny
\begin{tabular}{|c || c|c|c| c|c|c|c|}\hline
Line label &   1  & 2 & 3 & 4 & 5 & 6 & 7  \\ \hline \hline
Capacity [GW] &  0.63 & 0.38 & 1.16 & 0.49 & 0.81 & 0.88 & 1.05\\ \hline
\end{tabular}
\label{tabcap}\vspace{8pt}
\end{table}

\subsubsection{Setup of Transmission Line Constraints}

As spatial constraints, we take into account the capacities of the major transmission lines denoted as Lines~1--7 in Fig.~\ref{fignetworks}, while regarding each subarea as an aggregated single bus system, inside of which the capacities of transmission lines are large enough.
In this simulation, we set the capacities of Lines~1--7 as listed in Table~\ref{tabcap}.

\subsubsection{Setup of Prosumption Cost Functions}

The prosumption cost function of each of all aggregators is constructed based on the definition of $\tilde{F}_{\alpha}$ in \eqref{pcosfun2} for which the uniform probability distribution, i.e.,  
\[
\pi_{\alpha}^{(s)}=\frac{1}{10},\quad \forall s \in \{1,\ldots,10 \}
\] 
is supposed for every aggregator.

\begin{figure*}[t]
\centering
\includegraphics[width=180mm]{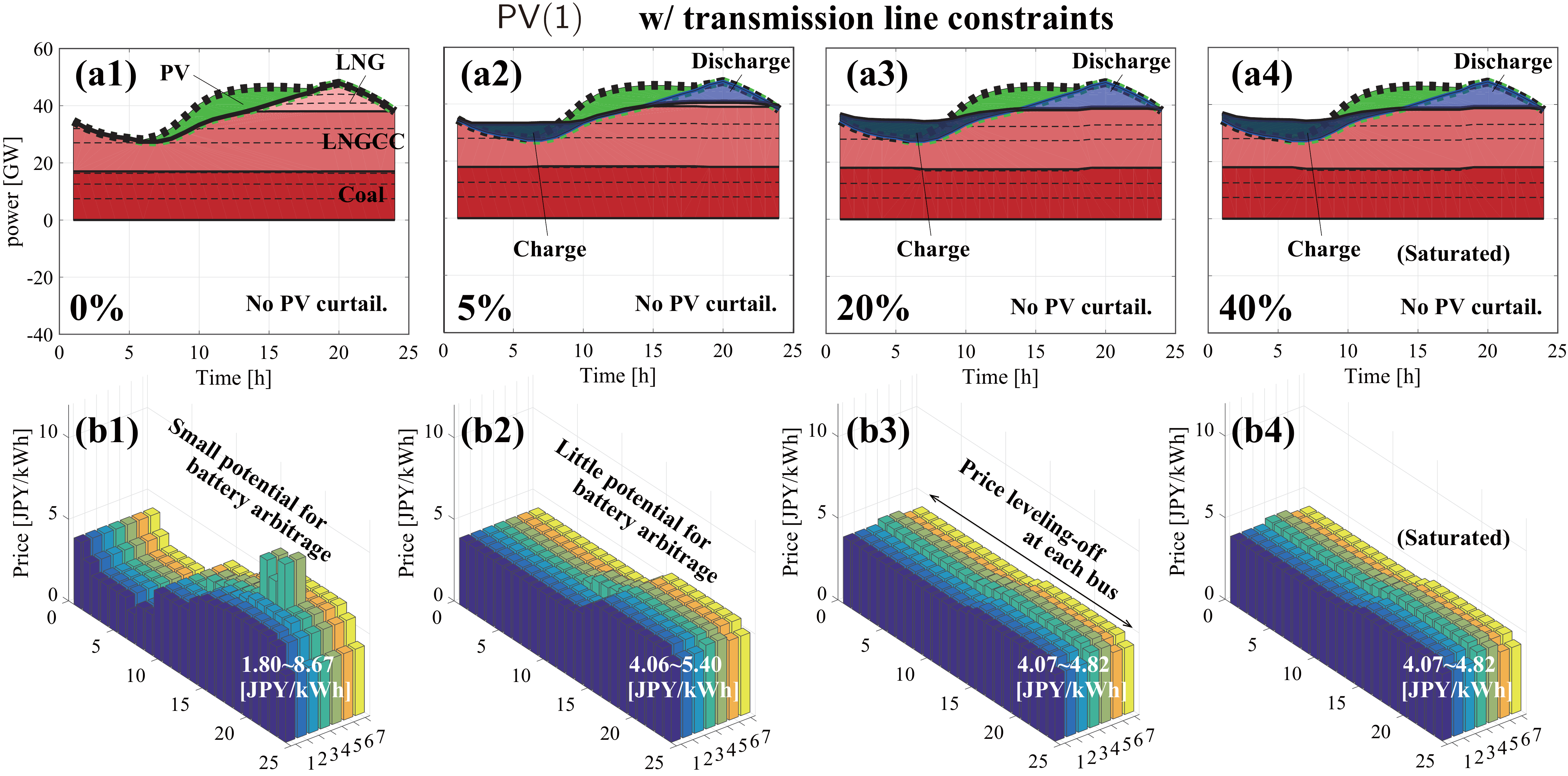}
\vspace{3pt}
\caption{
\textbf{(a1)--(a4)} Overviews of prosumption balance at the penetration level of \textsf{PV(1)}. 
\textbf{(b1)--(b4)} Clearing price profiles.
\textbf{(a1),(b1)} 0\% battery penetration level. 
\textbf{(a2),(b2)} 5\% battery penetration level.
\textbf{(a3),(b3)} 20\% battery penetration level.
\textbf{(a4),(b4)} 40\% battery penetration level.
}
\label{figover1}
\vspace{8pt}
\end{figure*}

\begin{figure*}[t]
\centering
\includegraphics[width=180mm]{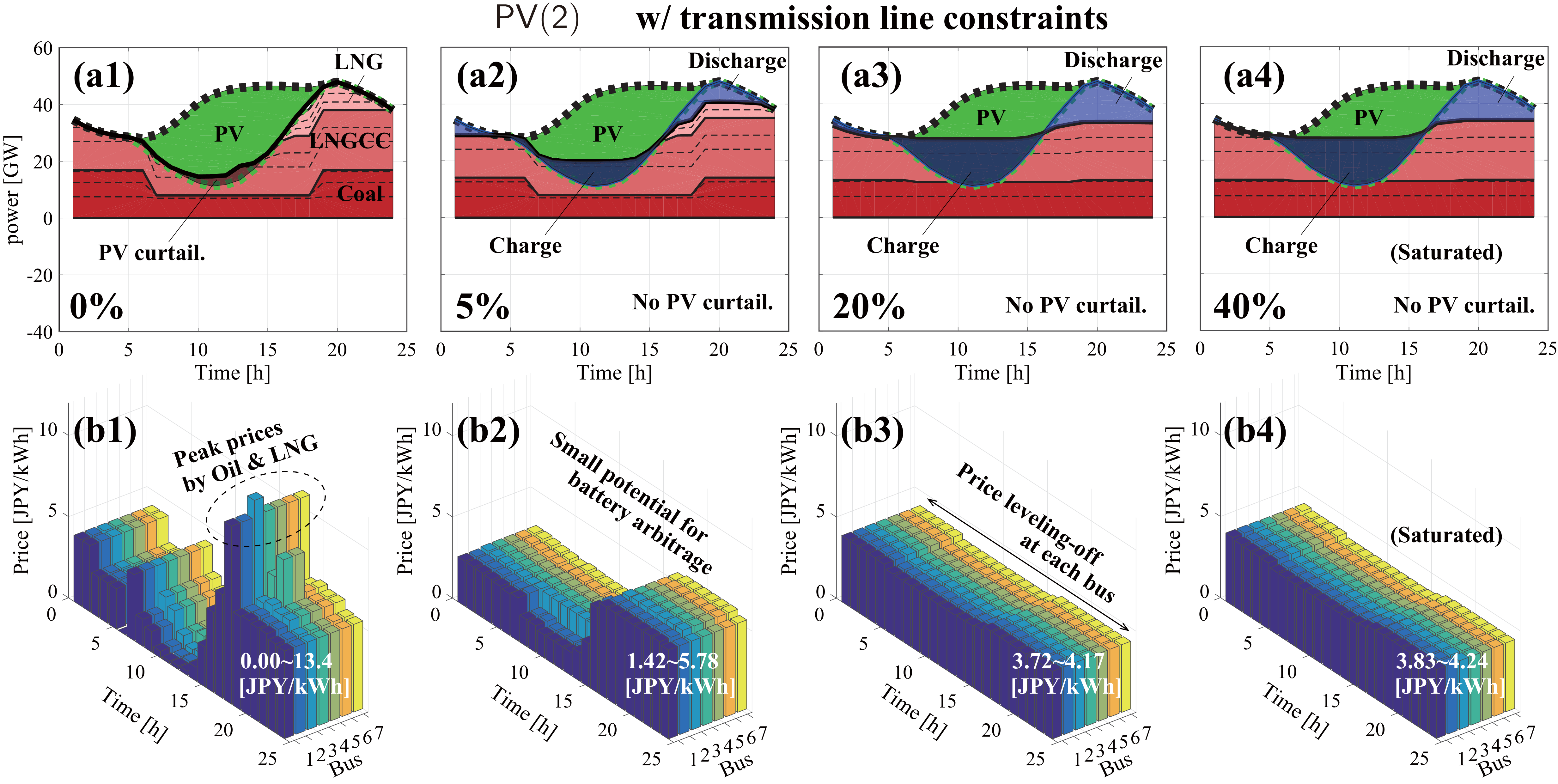}
\vspace{3pt}
\caption{
\textbf{(a1)--(a4)} Overviews of prosumption balance at the penetration level of \textsf{PV(2)}. 
\textbf{(b1)--(b4)} Clearing price profiles.
\textbf{(a1),(b1)} 0\% battery penetration level. 
\textbf{(a2),(b2)} 5\% battery penetration level.
\textbf{(a3),(b3)} 20\% battery penetration level.
\textbf{(a4),(b4)} 40\% battery penetration level.
}
\label{figover2}
\vspace{8pt}
\end{figure*}

\begin{figure*}[t]
\centering
\includegraphics[width=180mm]{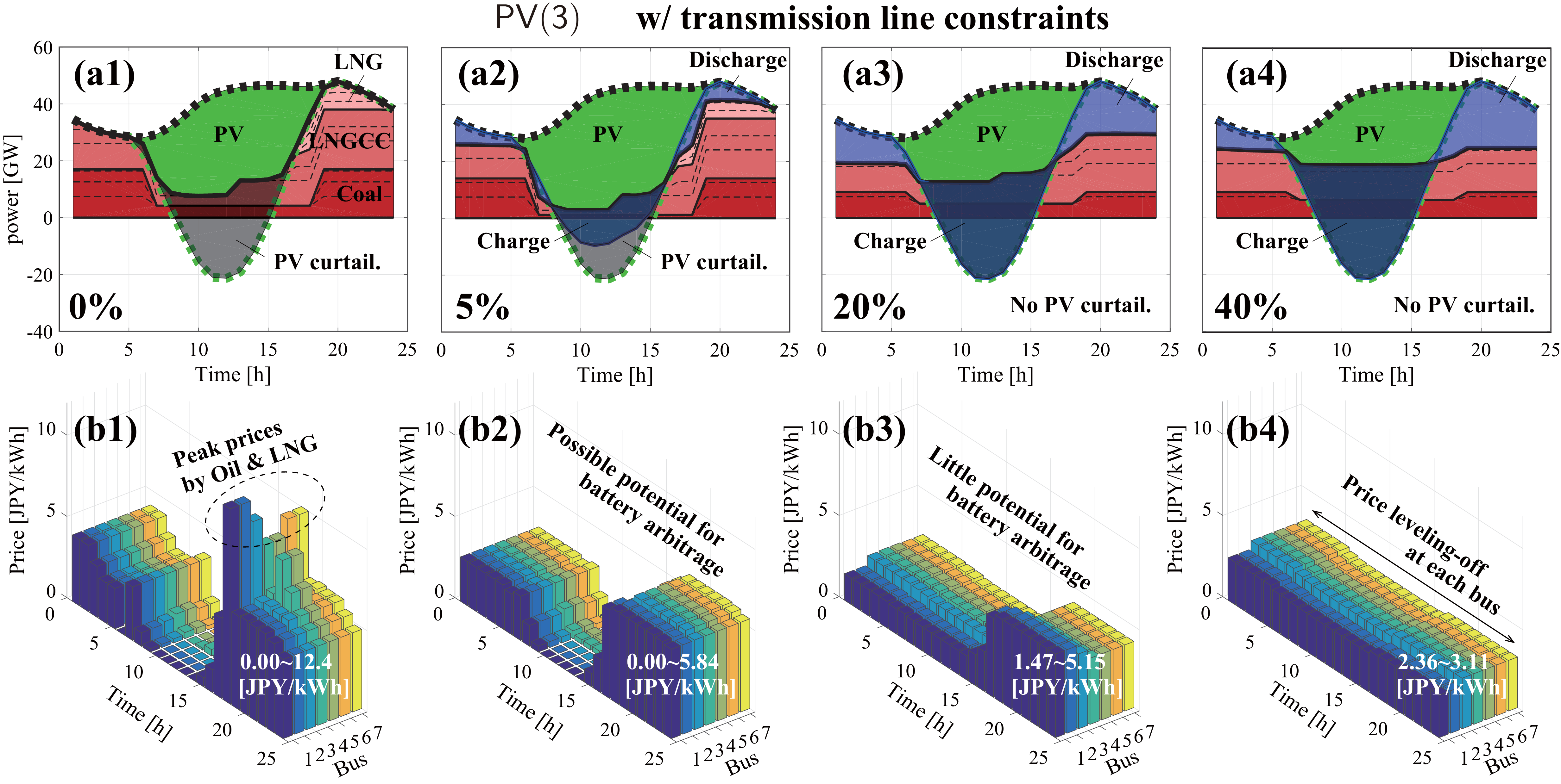}
\vspace{3pt}
\caption{
\textbf{(a1)--(a4)} Overviews of prosumption balance at the penetration level of \textsf{PV(3)}. 
\textbf{(b1)--(b4)} Clearing price profiles.
\textbf{(a1),(b1)} 0\% battery penetration level. 
\textbf{(a2),(b2)} 5\% battery penetration level.
\textbf{(a3),(b3)} 20\% battery penetration level.
\textbf{(a4),(b4)} 40\% battery penetration level.
}
\label{figover3}
\vspace{8pt}
\end{figure*}

\begin{figure*}[t]
\centering
\includegraphics[width=180mm]{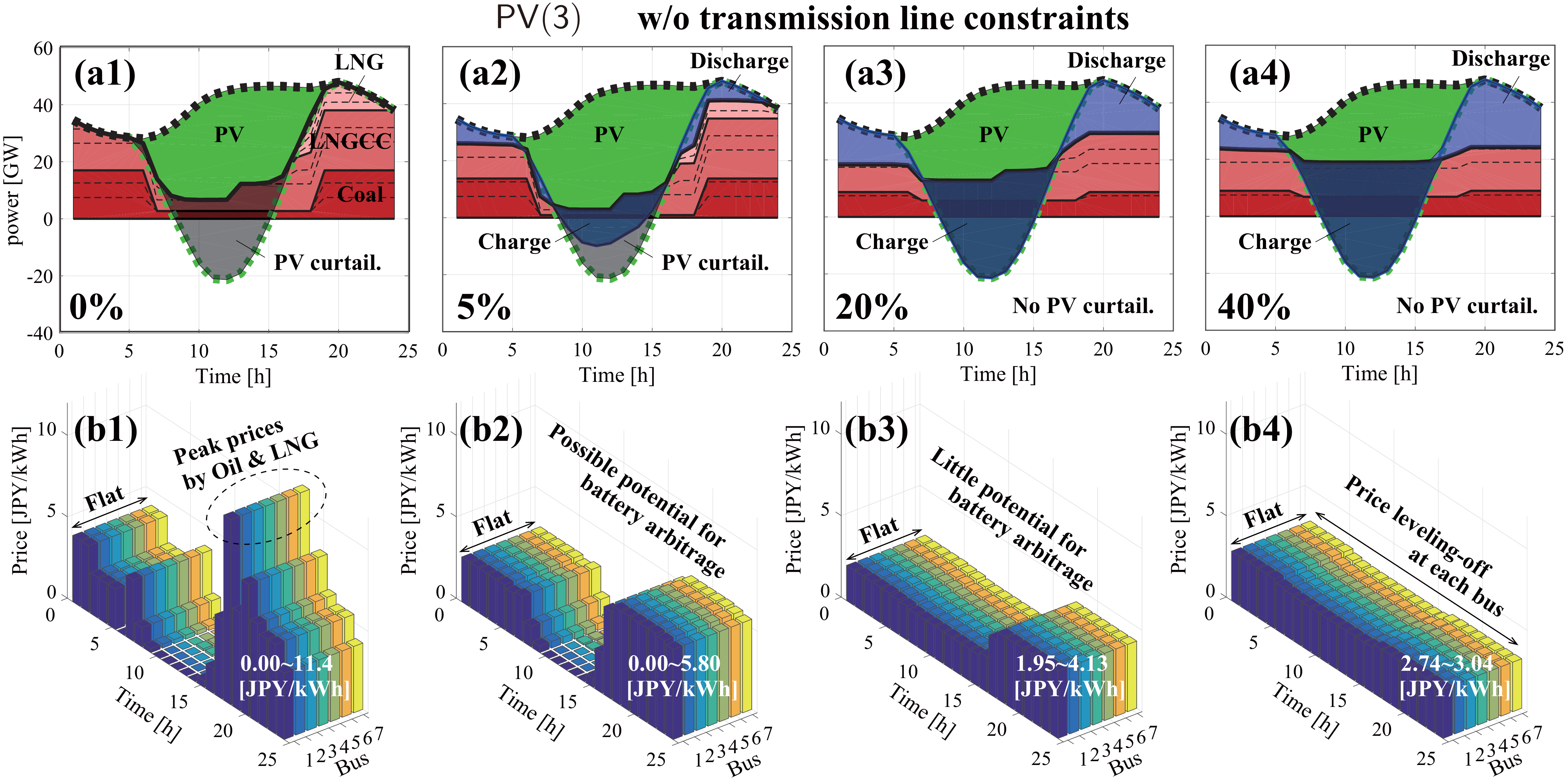}
\vspace{3pt}
\caption{
\textbf{(a1)--(a4)} Overviews of prosumption balance at the penetration level of \textsf{PV(3)} when transmission line constraints are negligible. 
\textbf{(b1)--(b4)} Clearing price profiles.
\textbf{(a1),(b1)} 0\% battery penetration level. 
\textbf{(a2),(b2)} 5\% battery penetration level.
\textbf{(a3),(b3)} 20\% battery penetration level.
\textbf{(a4),(b4)} 40\% battery penetration level.
}
\label{figover3r}
\vspace{8pt}
\end{figure*}

\begin{figure*}[t]
\centering
\includegraphics[width=180mm]{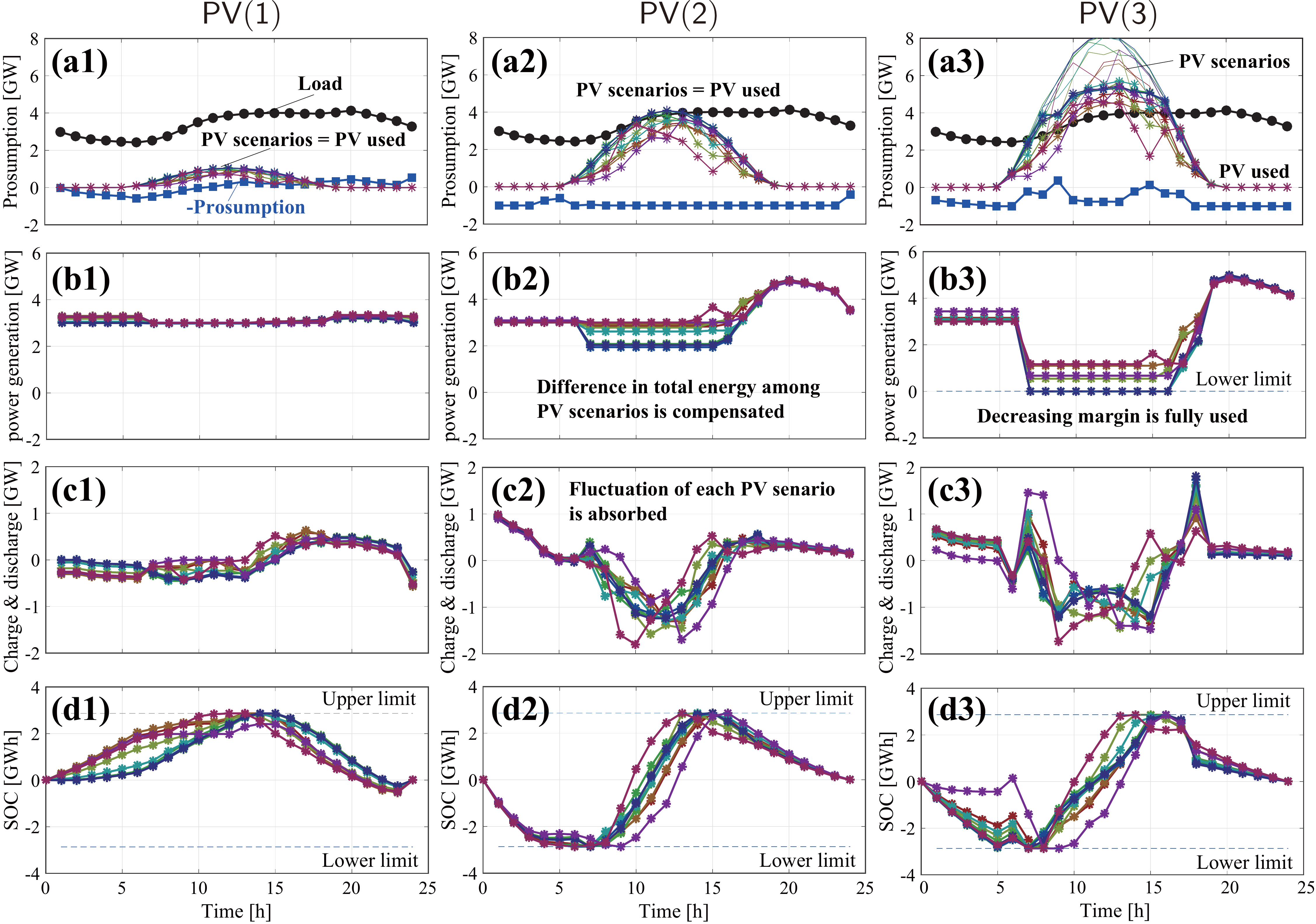}
\vspace{3pt}
\caption{
Profiles of Aggregator~1 at 5\% battery penetration level. 
\textbf{(a1)--(d1)} Penetration level of \textsf{PV(1)}. 
\textbf{(a2)--(d2)} Penetration level of \textsf{PV(2)}. 
\textbf{(a3)--(d3)} Penetration level of \textsf{PV(3)}. 
\textbf{(a1)--(a3)} Prosumption (sign opposite), load, PV scenarios, PV curtailments subtracted from PV scenarios. 
\textbf{(b1)--(b3)} Thermal power generation.
\textbf{(c1)--(c3)} Battery charge and discharge. 
\textbf{(d1)--(d3)} SOC.
}
\label{figagg1}
\vspace{8pt}
\end{figure*}

\begin{figure*}[t]
\centering
\includegraphics[width=180mm]{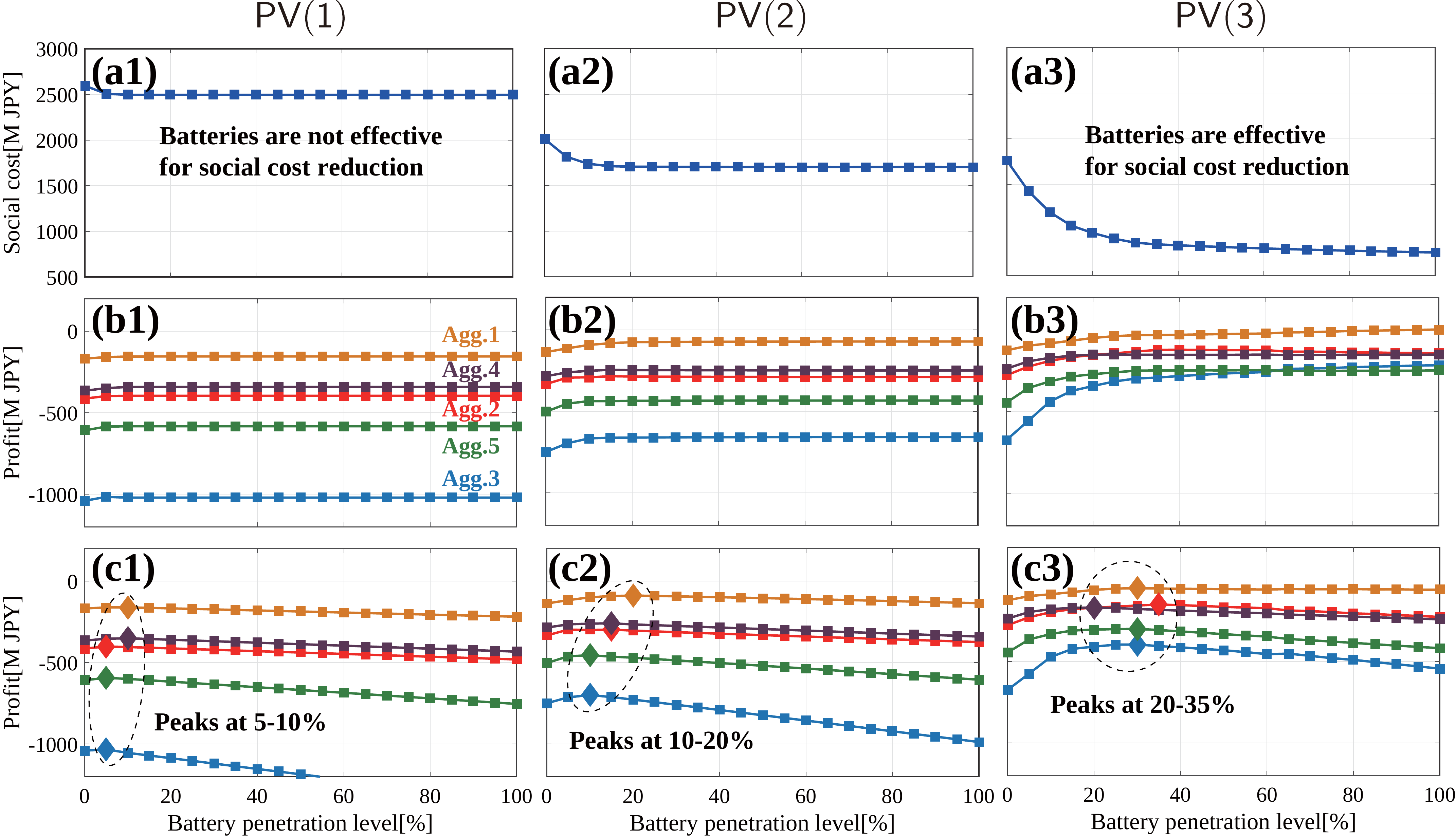}
\vspace{-0pt}
\caption{
\textbf{(a1)--(a3)} Social costs. 
\textbf{(b1)--(b3)} Profit of each aggregator. 
\textbf{(c1)--(c3)} Net profit of each aggregator considering battery purchase costs.
Each diamond mark corresponds to the battery penetration level at which the maximum personal profit with respect to battery system enhancement is attained.
}
\label{figcospro}
\vspace{3pt}
\end{figure*}

\subsection{Simulation Results}

\subsubsection{Overviews of Prosumption Balancing}

Solving the social cost minimization problem \eqref{marketp}, we obtain the transacted spatio-temporal prosumption profiles $x_1^*,\ldots,x_{5}^*$ and the clearing price $\lambda^*$.
For visualization of the market results, we introduce the notation of 
\[
\spliteq{
\bigl( g_{\alpha}^*(s),& \delta_{\alpha}^*(s)  ,q_{\alpha}^*(s) \bigr) \\
&:=\argmin_{(g_{\alpha},\delta_{\alpha},q_{\alpha}) \in \mathcal{F}_{\alpha}(x_{\alpha}^{*};p_{\alpha}^{(s)})}\Bigl\{G_{\alpha}(g_{\alpha})+D_{\alpha}(\delta_{\alpha})\Bigr\},
}
\]
which represents the optimal adjustable variable when the renewable scenario $p_{\alpha}^{(s)}$ arises.
The components of $x_{\alpha}^*$ and $\delta_{\alpha}^*$ are denoted in such a way that
\[
x_{\alpha}^* = ( x_{\alpha}^{(1)*}, \ldots, x_{\alpha}^{(7)*} ),\quad
\delta_{\alpha}^*(s) = \left( \delta_{\alpha}^{{\rm in}*}(s), \delta_{\alpha}^{{\rm out}*}(s) \right).
\]

At the levels of \textsf{PV(1)}--\textsf{PV(3)}, we plot the overviews of prosumption balancing in Figs.~\ref{figover1}--\ref{figover3}\textbf{(a1)}--\textbf{(a4)}, the subfigures of which correspond to 0\%, 5\%, 20\%, and 40\% battery penetration levels, respectively.
The black thick line in each subfigure represents the average of the total energy amounts generated by the thermal generators, i.e.,
\[
\sum_{\alpha =1}^{5}
\left\{
\frac{1}{10}
\sum_{s=1}^{10} 
g^*_{\alpha}(s)
\right\}.
\]
The gray shadowed area corresponds to the average of the total PV curtailment amount, i.e., 
\[
\sum_{\alpha =1}^{5}
\left\{
\frac{1}{10}
\sum_{s=1}^{10} 
q^*_{\alpha}(s)
\right\}.
%
\]
The blue lines in Figs.~\ref{figover1}--\ref{figover3}\textbf{(a2)}--\textbf{(a4)} represent the average of the total battery charge and discharge profiles, i.e.,
\[
\sum_{\alpha =1}^{5}
\left\{
\frac{1}{10}
\sum_{s=1}^{10} 
\bigl(
\delta^{{\rm out}*}_{1}(s)-\delta^{{\rm in}*}_{1}(s)
\bigr)
\right\}.
%
\]

From these figures, we see that the total thermal power generation profile becomes flat as the battery penetration level increases.
Furthermore, from Fig.~\ref{figover1}\textbf{(a1)} and Fig.~\ref{figover2}\textbf{(a1)}, we see that the amounts of PV curtailment are considerably small even without batteries.
This is because the PV uncertainty is relatively small as thermal generators can manage it.
On the other hand, the PV uncertainty in Fig.~\ref{figover3}\textbf{(a1)} is too large to manage.
Then, Figs.~\ref{figover3}\textbf{(a2)}--\textbf{(a4)} show that the PV curtailment amount properly decreases as the battery penetration level increases.
These results imply that PV power generation with an appropriate amount of batteries has priority of energy supply higher than thermal generators while that without batteries does not always have such priority due to its uncertainty.
This result gives an answer to \textbf{Q2} in Section~\ref{seciq} with regard to merit order of different energy resources.

For reference, at the level of \textsf{PV(3)}, we show in Figs.~\ref{figover3r}\textbf{(a1)}--\textbf{(a4)} the overviews of prosumption balancing  when transmission line constraints on Lines~1--7 are removed.
In fact, Figs.~\ref{figover3r}\textbf{(a1)}--\textbf{(a4)} are almost the same as Figs.~\ref{figover3}\textbf{(a1)}--\textbf{(a4)}, meaning that the transmission line capacities in Table~\ref{tabcap} are large enough, at least, to obtain the optimal prosumption balancing.

\subsubsection{Spetio-Temporal Clearing Price Profiles}

At the levels of \textsf{PV(1)}--\textsf{PV(3)}, we plot in Figs.~\ref{figover1}--\ref{figover3}\textbf{(b1)}--\textbf{(b4)} the resultant clearing price profiles, which are spatio-temporally distributed.
In each subfigure, the minimum and maximum prices are written in white.
First, from Figs.~\ref{figover1}--\ref{figover3}\textbf{(b1)}, we see that peak prices around evening appear due to the use of expensive generators.
Then, from Figs.~\ref{figover1}--\ref{figover3}\textbf{(b2)}, we see that those price peaks disappear as the battery penetration level slightly increases.
Furthermore, we can also see from Figs.~\ref{figover1}--\ref{figover3}\textbf{(b2)}--\textbf{(b4)} that price differences gradually reduce as the battery penetration level increases, and finally an increased  battery penetration level results in price leveling-off, i.e., the clearing price profile at each of all buses has an almost flat distribution.
This is the result of arbitrage, i.e., temporal energy shift, enabled by batteries.

For reference, at the level of \textsf{PV(3)}, we show in Figs.~\ref{figover3r}\textbf{(b1)}--\textbf{(b4)} the resultant clearing price profiles when  transmission line constraints are removed.
In this case, the spatial distributions of clearing price profiles become flat.
This is because Buses~1--7 can be equivalently aggregated into a single bus; see Figs.~\ref{figthreeagg}\textbf{(a)} and \textbf{(b)} for the depiction of a multiple-bus system and its aggregated single-bus alternative.

\subsubsection{Resultant Profiles of Aggregator 1}

At 5\% battery penetration level, we plot the resultant profiles of Aggregator~1 in Fig.~\ref{figagg1}.
In particular,  Figs.~\ref{figagg1}\textbf{(a1)}--\textbf{(d1)} correspond to \textsf{PV(1)}, Figs.~\ref{figagg1}\textbf{(a2)}--\textbf{(d2)} correspond to \textsf{PV(2)}, and Figs.~\ref{figagg1}\textbf{(a3)}--\textbf{(d3)} correspond to \textsf{PV(3)}.
For clarity, the legends of Fig.~\ref{figagg1} are summarized in Table~\ref{tableg}.
From these figures, we see that the uncertainty of PV scenarios is absorbed by appropriate combination of generator management, battery management, and PV curtailment.
In particular, we see from Figs.~\ref{figagg1}\textbf{(b2)}--\textbf{(c2)} that thermal generators mainly serves for compensating the difference in the total energy amount of PV scenarios while batteries serves for absorbing the fluctuation of each PV scenario.
Only in Fig.~\ref{figagg1}\textbf{(a3)}, the PV curtailment is carried out as a last resort because the PV uncertainty is too large to be managed by generators and batteries.
Note that the final SOC in every case is mostly restored to the neutral as a result of the final SOC evaluation by $D_{\alpha}$ in \eqref{spD}.

\begin{table*}[t]\centering
\caption{Legend of Fig.~\ref{figagg1} 
(For representation of SOC, ``$M$" is used to denote lower triangular matrix whose nonzero elements are all one).}\vspace{9pt}
\tiny
\begin{tabular}{|c | m{30em} | c | c| }\hline
Mathematical representation & \centering Meaning  & Subfigures  & Line marker \\ \hline \hline
$l_1$  & 
Load  & 
\textbf{(a1)}--\textbf{(a3)} &
$\bullet$ \\ \hline
$-(x_1^{(1)*} + \cdots + x_1^{(7)*})$  & 
Sum of prosumption profiles at all buses (with opposite signs)  & 
\textbf{(a1)}--\textbf{(a3)} &
$\blacksquare$ \\ \hline
$g_1^*(s) \quad (s =1,\ldots,10)$ & 
Generation profiles (10 scenarios) &
\textbf{(b1)}--\textbf{(b3)} &
$\ast$ \\ \hline
$p_1^{(s)} \quad (s =1,\ldots,10)$ & 
PV senarios (10 scenarios) &
\textbf{(a1)}--\textbf{(a3)} &
none \\ \hline
$p_1^{(s)}-q_1^*(s) \quad (s =1,\ldots,10)$ & 
PV curtailment profiles subtracted from PV scenarios (10 scenarios) &
\textbf{(a1)}--\textbf{(a3)} &
$\ast$ \\ \hline
$\delta^{{\rm out}*}_{1}(s)-\delta^{{\rm in}*}_{1}(s) \quad (s =1,\ldots,10)$ & 
Battery charge and discharge profiles (10 scenarios) &
\textbf{(c1)}--\textbf{(c3)} &
$\ast$ \\ \hline
$M(\delta^{{\rm in}*}_{1}(s)-\delta^{{\rm out}*}_{1}(s)) \quad (s =1,\ldots,10)$ & 
SOC profiles (10 scenarios) &
\textbf{(d1)}--\textbf{(d3)} &
$\ast$ \\ \hline
\end{tabular}
\label{tableg}\vspace{3pt}
\end{table*}


\subsubsection{Social Costs versus Battery Penetration Levels}

Varying battery penetration levels, we plot the resultant social costs in Fig.~\ref{figcospro}\textbf{(a1)}--\textbf{(a3)}, which correspond to \textsf{PV(1)}--\textsf{PV(3)}, respectively.
From these figures, we see that the social cost decreases as the battery penetration level increases.
This is a natural consequence because the realizable spatio-temporal prosumption profile set $\mathcal{X}_{\alpha}$ in \eqref{realsp}, the direct product of which corresponds to the feasible domain of the social cost minimization problem \eqref{marketp}, enlarges as the battery penetration level increases.
We also see that the higher the PV penetration level is, the more effective the battery enhancement is.
This is because the amount of waste PV curtailment can be significantly reduced by energy storage systems.

\subsubsection{Personal Profits versus Battery Penetration Levels}

From the viewpoint of personal profit, we discuss a social equilibrium of battery penetration levels, giving an answer to \textbf{Q3}.
We plot the profits of each aggregator, i.e., $J_{\alpha}(x_{\alpha}^{*};\lambda^{*})$, in Figs.~\ref{figcospro}\textbf{(b1)}--\textbf{(b3)}, which correspond to \textsf{PV(1)}--\textsf{PV(3)}, respectively.
From these figures, at the levels of \textsf{PV(2)} and \textsf{PV(3)}, we see that the increasing rate of the personal profit is relatively high at lower battery penetration levels, while it decreases at higher battery penetration levels.
This profit saturation can be explained by the fact that the increment in profits by battery arbitrage as well as by PV curtailment reduction decrease as the battery penetration level increases.
This trend can be confirmed from the Figs.~\ref{figover1}--\ref{figover3}\textbf{(a1)}--\textbf{(a4)} and Figs.~\ref{figover1}--\ref{figover3}\textbf{(b1)}--\textbf{(b4)}, showing that both PV curtailment amount and temporal price volatility reduce as the battery penetration level increases.

Note that the enhancement of battery systems generally requires an additional cost for battery purchase.
To take into account this fact, we calculate the actual net profit as in Figs.~\ref{figcospro}\textbf{(c1)}--\textbf{(c3)}.
The battery price is supposed to be 5,000 [JPY/kWh] according to the price estimation in 2030 published as the roadmap \cite{NEDObat2013} from New Energy and Industrial Technology Development Organization (NEDO).
From these figures, we see that there is an equilibrium of the battery penetration level, highlighted by the diamond marks, that attains the maximum of the net profit for each PV penetration level.
It should be emphasized that this equilibrium can be realized as the assembly of long-term rational decisions of competitive market players who pursue just personal profit maximization, not social profit maximization.
This spread of battery systems is a consequence of mechanism design in this paper.

\section{Concluding Remarks}
In this paper, we presented modeling and analysis of day-ahead spatio-temporal energy markets.
We modeled the spatio-temporal energy market as an adjustable robust convex program, in which each competitive player is responsible for absorbing the uncertainty of renewable power generation.
Furthermore, by numerical analysis with a PV-integrated IEEJ EAST 30-machine model, we discussed the merit order of different energy resources and the existence of a social equilibrium of battery penetration levels, at which the maximum personal profit with respect to battery system enhancement is attained.
Consideration of non-convexity in unit commitment problems would be an interesting direction of future research.

\section*{Acknowledgements}
The authors would like to thank Dr.~Hideharu Sugihara, Osaka University, for technical assistance to construct the PV-integrated IEEJ EAST 30-machine model. 
This work was supported by JST CREST Grant Number JP-MJCR15K1, Japan.


\bibliographystyle{ieeetr}         
\bibliography{IEEEabrv,reference,reference_CREST,reference_aux}

\begin{thebibliography}{10}

\bibitem{annaswamy2013ieee}
A.~M. Annaswamy and M.~Amin, {\em {IEEE} vision for smart grid controls: 2030
  and beyond}.
\newblock IEEE, 2013.

\bibitem{chu2012opportunities}
S.~Chu and A.~Majumdar, ``Opportunities and challenges for a sustainable energy
  future,'' {\em Nature}, vol.~488, no.~7411, pp.~294--303, 2012.

\bibitem{dunn2011electrical}
B.~Dunn, H.~Kamath, and J.-M. Tarascon, ``Electrical energy storage for the
  grid: a battery of choices,'' {\em Science}, vol.~334, no.~6058,
  pp.~928--935, 2011.

\bibitem{shum2007photovoltaic}
K.~L. Shum and C.~Watanabe, ``Photovoltaic deployment strategy in {Japan} and
  the {USA} --an institutional appraisal,'' {\em Energy Policy}, vol.~35,
  no.~2, pp.~1186--1195, 2007.

\bibitem{bublitz2019survey}
A.~Bublitz, D.~Keles, F.~Zimmermann, C.~Fraunholz, and W.~Fichtner, ``A survey
  on electricity market design: Insights from theory and real-world
  implementations of capacity remuneration mechanisms,'' {\em Energy
  Economics}, vol.~80, pp.~1059--1078, 2019.

\bibitem{liu2016bidding}
G.~Liu, Y.~Xu, and K.~Tomsovic, ``Bidding strategy for microgrid in day-ahead
  market based on hybrid stochastic/robust optimization,'' {\em Smart Grid,
  IEEE Transactions on}, vol.~7, no.~1, pp.~227--237, 2016.

\bibitem{sarker2016optimal}
M.~R. Sarker, Y.~Dvorkin, and M.~A. Ortega-Vazquez, ``Optimal participation of
  an electric vehicle aggregator in day-ahead energy and reserve markets,''
  {\em IEEE Transactions on Power Systems}, vol.~31, no.~5, pp.~3506--3515,
  2016.

\bibitem{sardou2016energy}
I.~G. Sardou, M.~E. Khodayar, K.~Khaledian, M.~Soleimani-Damaneh, and M.~T.
  Ameli, ``Energy and reserve market clearing with microgrid aggregators,''
  {\em IEEE Transactions on Smart Grid}, vol.~7, no.~6, pp.~2703--2712, 2016.

\bibitem{aravena2017renewable}
I.~Aravena and A.~Papavasiliou, ``Renewable energy integration in zonal
  markets,'' {\em IEEE Transactions on Power Systems}, vol.~32, no.~2,
  pp.~1334--1349, 2017.

\bibitem{he2015optimal}
G.~He, Q.~Chen, C.~Kang, P.~Pinson, and Q.~Xia, ``Optimal bidding strategy of
  battery storage in power markets considering performance-based regulation and
  battery cycle life,'' {\em IEEE Transactions on Smart Grid}, vol.~7, no.~5,
  pp.~2359--2367, 2015.

\bibitem{knudsen2015dynamic}
J.~Knudsen, J.~Hansen, and A.~M. Annaswamy, ``A dynamic market mechanism for
  the integration of renewables and demand response,'' {\em IEEE Transactions
  on Control Systems Technology}, vol.~24, no.~3, pp.~940--955, 2015.

\bibitem{shiltz2016integrated}
D.~J. Shiltz, M.~Cvetkovi{\'c}, and A.~M. Annaswamy, ``An integrated dynamic
  market mechanism for real-time markets and frequency regulation,'' {\em
  Sustainable Energy, IEEE Transactions on}, vol.~7, no.~2, pp.~875--885, 2016.

\bibitem{wang2016real}
B.~Wang and B.~F. Hobbs, ``Real-time markets for flexiramp: A stochastic unit
  commitment-based analysis,'' {\em IEEE Transactions on Power Systems},
  vol.~31, no.~2, pp.~846--860, 2016.

\bibitem{schneider2018energy}
I.~Schneider and M.~Roozbehani, ``Energy market design for renewable resources:
  Imbalance settlements and efficiency--robustness tradeoffs,'' {\em IEEE
  Transactions on Power Systems}, vol.~33, no.~4, pp.~3757--3767, 2018.

\bibitem{bohn1984optimal}
R.~E. Bohn, M.~C. Caramanis, and F.~C. Schweppe, ``Optimal pricing in
  electrical networks over space and time,'' {\em The Rand Journal of
  Economics}, pp.~360--376, 1984.

\bibitem{hogan1992contract}
W.~W. Hogan, ``Contract networks for electric power transmission,'' {\em
  Journal of regulatory economics}, vol.~4, no.~3, pp.~211--242, 1992.

\bibitem{schweppe2013spot}
F.~C. Schweppe, M.~C. Caramanis, R.~D. Tabors, and R.~E. Bohn, {\em Spot
  pricing of electricity}.
\newblock Springer Science \& Business Media, 2013.

\bibitem{virasjoki2016market}
V.~Virasjoki, P.~Rocha, A.~S. Siddiqui, and A.~Salo, ``Market impacts of energy
  storage in a transmission-constrained power system,'' {\em IEEE Transactions
  on Power Systems}, vol.~31, no.~5, pp.~4108--4117, 2016.

\bibitem{correa2018robust}
C.~A. Correa-Florez, A.~Michiorri, and G.~Kariniotakis, ``Robust optimization
  for day-ahead market participation of smart-home aggregators,'' {\em Applied
  energy}, vol.~229, pp.~433--445, 2018.

\bibitem{savelli2017optimization}
I.~Savelli, A.~Giannitrapani, S.~Paoletti, and A.~Vicino, ``An optimization
  model for the electricity market clearing problem with uniform purchase price
  and zonal selling prices,'' {\em IEEE Transactions on Power Systems},
  vol.~33, no.~3, pp.~2864--2873, 2017.

\bibitem{savelli2018new}
I.~Savelli, B.~Corn{\'e}lusse, A.~Giannitrapani, S.~Paoletti, and A.~Vicino,
  ``A new approach to electricity market clearing with uniform purchase price
  and curtailable block orders,'' {\em Applied energy}, vol.~226, pp.~618--630,
  2018.

\bibitem{cornelusse2019community}
B.~Corn{\'e}lusse, I.~Savelli, S.~Paoletti, A.~Giannitrapani, and A.~Vicino,
  ``A community microgrid architecture with an internal local market,'' {\em
  Applied Energy}, vol.~242, pp.~547--560, 2019.

\bibitem{rockafellar1970convex}
R.~T. Rockafellar, {\em Convex analysis}.
\newblock Princeton university press, 1970.

\bibitem{boyd2004convex}
S.~Boyd and L.~Vandenberghe, {\em Convex optimization}.
\newblock Cambridge university press, 2004.

\bibitem{bertsimas2013adaptive}
D.~Bertsimas, E.~Litvinov, X.~A. Sun, J.~Zhao, and T.~Zheng, ``Adaptive robust
  optimization for the security constrained unit commitment problem,'' {\em
  IEEE Transactions on Power Systems}, vol.~28, no.~1, pp.~52--63, 2013.

\bibitem{schiro2016convex}
D.~A. Schiro, T.~Zheng, F.~Zhao, and E.~Litvinov, ``Convex hull pricing in
  electricity markets: Formulation, analysis, and implementation challenges,''
  {\em IEEE Transactions on Power Systems}, vol.~31, no.~5, pp.~4068--4075,
  2016.

\bibitem{ben2004adjustable}
A.~Ben-Tal, A.~Goryashko, E.~Guslitzer, and A.~Nemirovski, ``Adjustable robust
  solutions of uncertain linear programs,'' {\em Mathematical Programming},
  vol.~99, no.~2, pp.~351--376, 2004.

\bibitem{zeng2013solving}
B.~Zeng and L.~Zhao, ``Solving two-stage robust optimization problems using a
  column-and-constraint generation method,'' {\em Operations Research Letters},
  vol.~41, no.~5, pp.~457--461, 2013.

\bibitem{yanikouglu2019survey}
{\.I}.~Yan{\i}ko{\u{g}}lu, B.~L. Gorissen, and D.~den Hertog, ``A survey of
  adjustable robust optimization,'' {\em European Journal of Operational
  Research}, vol.~277, no.~3, pp.~799--813, 2019.

\bibitem{jiang2012robust}
R.~Jiang, J.~Wang, and Y.~Guan, ``Robust unit commitment with wind power and
  pumped storage hydro,'' {\em IEEE Transactions on Power Systems}, vol.~27,
  no.~2, p.~800, 2012.

\bibitem{zheng2015stochastic}
Q.~P. Zheng, J.~Wang, and A.~L. Liu, ``Stochastic optimization for unit
  commitment―a review,'' {\em IEEE Trans. Power Syst}, vol.~30, no.~4,
  pp.~1913--1924, 2015.

\bibitem{cobos2016least}
N.~G. Cobos, J.~M. Arroyo, and A.~Street, ``Least-cost reserve offer
  deliverability in day-ahead generation scheduling under wind uncertainty and
  generation and network outages,'' {\em IEEE Transactions on Smart Grid},
  vol.~9, no.~4, pp.~3430--3442, 2016.

\bibitem{kobayashi2019electricity}
T.~Kobayashi, S.~Okatani, and {Mori Hamada \& Matsumoto}, ``Electricity
  regulation in {Japan}: overview.'' Thomson Reuters: Practical Law.
\newblock
  \url{https://uk.practicallaw.thomsonreuters.com/5-630-3729?transitionType=Default&contextData=(sc.Default)&firstPage=true&bhcp=1}.

\bibitem{lunackova2017merit}
P.~Lunackova, J.~Prusa, and K.~Janda, ``The merit order effect of {Czech}
  renewable energy,'' {\em Centre for Applied Macroeconomic Analysis},
  pp.~1--45, 2017.

\bibitem{ishizaki2017distributed}
T.~Ishizaki, M.~Koike, N.~Yamaguchi, A.~Ueda, T.~Bo, and J.-i. Imura, ``A
  distributed scheme for power profile market clearing under high battery
  penetration,'' in {\em Proc. 20th IFAC WC}, 2017.

\bibitem{ishizaki2017bidding}
T.~Ishizaki, M.~Koike, N.~Yamaguchi, and J.-i. Imura, ``Bidding system design
  for multiperiod electricity markets: Pricing of stored energy shiftability,''
  in {\em Proc. of the 55th IEEE CDC}, pp.~165--170, 2017.

\bibitem{ben1998robust}
A.~Ben-Tal and A.~Nemirovski, ``Robust convex optimization,'' {\em Mathematics
  of operations research}, vol.~23, no.~4, pp.~769--805, 1998.

\bibitem{IEEJEast30}
``{The Institute of Electrical Engineers in Japan}.''
\newblock {Japanese} Power System Models.
  \url{http://denki.iee.jp/pes/?page_id=141}.

\bibitem{JPEA2017PV}
``{JPEA PV OUTOOK 2050},'' 2017.
\newblock \url{http://www.jpea.gr.jp/pvoutlook2050.pdf} (in Japanese).

\bibitem{NEDObat2013}
``{NEDO Battery RM2013},'' 2013.
\newblock \url{https://www.nedo.go.jp/content/100535728.pdf} (in Japanese).

\end{thebibliography}



\end{document}